\numberwithin{equation}{section}
  \let\ref\cref
\theoremstyle{plain}
\newtheorem{Thm}{Theorem}[section]
\newtheorem{Lem}[Thm]{Lemma}
\newtheorem{Prop}[Thm]{Proposition}
 \theoremstyle{definition}
\newtheorem{Def}[Thm]{Definition}
\newtheorem{Conj}[Thm]{Conjecture}
\newtheorem{Rem}[Thm]{Remark}
\newtheorem{?}[Thm]{Problem}
\newtheorem{Ex}[Thm]{Example}
\newtheorem{Cl}[Thm]{Claim}
\newtheorem{OP}[Thm]{Open Problem}
\algrenewcommand\algorithmicrequire{\textbf{Input:}}
\algrenewcommand\algorithmicensure{\textbf{Output:}}
\DeclarePairedDelimiter\prn{\lparen}{\rparen}
\newcommand{\lr}[1]{\prn*{#1}}
\DeclareMathOperator{\im}{im}
\DeclareMathOperator{\kernel}{ker}
\DeclareMathOperator{\mom}{mom}
\newcommand{\Hom}{{\rm{Hom}}}
\newcommand{\E}{\mathbb{E}}
\newcommand{\N}{\mathbb{N}}
\newcommand{\Z}{\mathbb{Z}}
\newcommand{\C}{\mathbb{C}}
\newcommand{\R}{\mathbb{R}}
\newcommand{\dirac}[1]{\delta_{#1}}
\newcommand{\mommat}[1]{\mathrm{M}_{#1}}
\newcommand{\fac}[1]{#1!}
\newcommand{\e}{\mathrm{e}}
\newcommand{\dual}{^*}
\newcommand{\interval}[1]{\left[#1\right]}
\newcommand{\linterval}[1]{\left[#1\right[}
\newcommand{\indicator}[1]{\mathbbm{1}_{#1}}
\renewcommand{\d}{\mathrm{d}}
\newcommand{\I}{\mathrm{i}}
\renewcommand{\P}{\mathbb{P}}
\NewDocumentCommand\mylist{O{:} >{\SplitList{,}}m}
{%
  \def\itemdelim{\def\itemdelim{#1}}
  \ProcessList{#2}{\mylistitem}
}
\newcommand\mylistitem[1]{\itemdelim #1}
\newcommand{\homog}[1]{%
  \let\dots\cdots{}
  \left[{\mylist[:]{#1}}\right]} 
\newcommand{\ratmap}{\dashrightarrow}
\newcommand{\units}{^{\times}}
\newcommand{\compose}{\mathbin{\circ}}
\newcommand{\closure}[1]{\overline{#1}}
\newcommand{\algclsr}[1]{\bar{#1}}
\newcommand{\restrict}[1]{|_{#1}}
\newcommand{\idealspan}[1]{\left\langle #1\right\rangle}
\newcommand{\saturation}[2]{#1\mathbin{:}{#2}^{\infty}}
\newcommand{\deriv}[1]{^{(#1)}}
\newcommand{\derivc}[1]{_{#1}}
\newcommand{\K}{\mathbbm{k}}
\newcommand{\variety}[1]{\mathcal #1}
\DeclareMathOperator{\rank}{rk}
\DeclareMathOperator{\initial}{in}
\newcommand{\contin}[1]{C^{#1}}
\newcommand{\convolution}{\mathbin{*}}
\title{Moment ideals of local Dirac mixtures}
\author{Alexandros Grosdos Koutsoumpelias\and Markus Wageringel}
\address{Institut f\"ur Mathematik,
Albrechtstr.~28\,a,
49076 Osnabr\"uck, Germany}
\email{\{alexandros.grosdoskoutsoumpelias, markus.wageringel\}@uos.de}
\subjclass[2010]{Primary 13P25, 62F10; Secondary 14Q10, 65T40}
\keywords{Algebraic statistics, local mixture, Dirac measure, Pareto distribution, parameter estimation, moment ideal, moment variety, Gr\"obner bases, elimination theory, Prony's method}
\begin{document}

\begin{abstract}
In this paper we study ideals arising from moments of local Dirac
measures and their mixtures.
We provide generators for the case of first order local Diracs
and explain how to obtain the moment ideal of the Pareto distribution from them.
We then use elimination theory and Prony's method
for parameter estimation of finite mixtures.
Our results are showcased with applications in signal processing and statistics.
We highlight the natural connections to algebraic statistics, combinatorics
and applications in analysis throughout the paper.
\end{abstract}

\maketitle

\section{Introduction} 

Moments of statistical and stochastic objects have recently gained attention 
from an algebraic and combinatorial point of view; 
see \cite{afs16:gaussians,rating,Kohn18}.
In this paper, we extend those methods to the study of 
moment ideals of mixture models coming from Dirac measures.

Finite mixture models appear in a wide range of applications in statistics
and possess a nice underlying geometric structure as in \cite{lindsay1983}.
They are of  use when a population consists of a finite number
of homogeneous subpopulations each having its own distribution
with density function $\phi_j(x)$.
Then the whole population follows a distribution with p.d.f.\ given by
\[\phi(x)=\sum_{j=1}^r \lambda_j \phi_j(x),\]
where $0\le\lambda_j \le 1$ and $\sum_{j=1}^r \lambda_j=1$.
A central problem associated with mixture models is identifying the parameters
involved in the distributions of the components as well as the mixing parameters $\lambda_j$ from a sample.
A common approach to this problem is computing the moments of the observed sample and finding the mixture model that best fits the observations. 

The moments $m_i$ of a distribution with probability density function $\phi$ are given by the integrals
\[m_i = \int x^i \phi(x) dx.\]
Moments of mixture distributions are therefore convex combinations of the moments of the components.
Despite the integral in the definition, 
it turns out that for many of the commonly used distributions
(Gaussian, Poisson, binomial, $\dots$),
the moments are polynomials in the parameters.
This allows for a number of algebraic techniques to be used,
such as studying determinants of moment matrices in \cite{lindsay1989},
or using polynomial algebra for the Gaussian distribution in work that started with Pearson \cite{pearson1894},
continued with \cite{monfrini2003} and \cite{lazard2004}
and was given a systematic algebraic and computational treatment in \cite{afs16:gaussians}.

In the case when the moments are polynomials
or, more generally, rational functions in the parameters,
one can study the (projective) variety containing all the points
$\homog{m_0, m_1, \dots , m_d} \in \P^d$.
Using Gr\"obner basis methods to compute the ideals
quickly becomes a computationally intractable problem
because these methods are very sensitive to the increase of the number of variables
as more mixture components are added.
On the geometric side, 
taking mixtures of a distribution corresponds to obtaining the secants of the  moment variety of this distribution, 
see for example \cite[Chapter~4]{drton2009}.
Aside from the statistical context, secant varieties play an important role in many other areas,
such as in tensor rank and tensor decomposition problems; see \cite{landsberg:tensors}.
A particular example of this is the symmetric tensor decomposition problem,
which can be formulated in terms of homogeneous polynomials and is classically known as Waring's problem:
Given a homogeneous polynomial $f\in \K[x_0,\dots,x_n]$ of degree $d$,
find a decomposition $f = L_{\xi_1}^d + \cdots + L_{\xi_r}^d$
into powers of linear forms $L_{\xi_j} \coloneqq \xi_{j0} x_0 + \cdots + \xi_{jn} x_n$, $j = 1,\dots,r$,
for the smallest possible natural number~$r$.
From a measure-theoretic point of view,
this involves a mixture of Dirac measures $\dirac{\xi_j}$ supported at points $\xi_j\in\P^n$.

Local mixtures are similar to mixture models and they have
a fascinating underlying geometric theory; see
\cite{marriott02:localmixtures,anaya-izquierdo2007}.
A finite local mixture of a distribution depending on a parameter $\xi$
with density function $\phi_{\xi}(x)$ involves 
adding some variation to the distribution through its derivatives
\[\phi_{\xi}(x) + \sum_{i=1}^l \alpha\derivc{i} \phi_{\xi}\deriv{i}(x) \]
for local mixing coefficients $\alpha\derivc{i}$.
We define $l$ to be the \emph{order} of a local mixture if $\alpha\derivc{l}$ is non-zero.
Adding to the statistics-to-geometry dictionary, 
the moments of local mixtures of order~$1$ correspond to taking the tangent variety of the moment variety.
Moments of higher-order local mixtures correspond to varieties known as osculating varieties \cite{bcgi2004:osculating}.

In this paper we consider the local mixtures of univariate Dirac measures.
We study the varieties associated to their moments 
and provide a generating set for the first order case in \ref{main2}
similar to the one in \cite{eis92:green}.
We use techniques from commutative algebra and combinatorics to prove this result.   
After reparametrizing the moments of the Pareto distribution, 
one observes that they are inverses of the first order local Dirac moments,
as shown in \ref{sec:pareto}.
Exploiting this fact and the generators we found, we obtain generators for the moment ideal of the Pareto as well.

In \ref{sec:recovery}, we study the problem of identifying the parameters of mixtures of first-order local Dirac distributions.
We use the equivalent cumulant coordinates that often 
simplify the varieties under consideration.
In the case of a mixture of two local Diracs
the moments are given by
\[m_i = \lambda(\xi_1^i+i\alpha_1\xi_1^{i-1})+(1-\lambda)(\xi_2^i+i\alpha_2\xi_2^{i-1}),\]
for a pair of parameters $(\xi_i, \alpha_i)$ from each mixture and a mixing parameter $\lambda$.
We are able to show that the moment (respectively cumulant) map 
\[ (\xi_1, \xi_2, \alpha_1, \alpha_2, \lambda) \longmapsto \homog{m_0, m_1, \dots , m_d} \]
is finite-to-one for $d=5$ and one-to-one for $d=6$. 
In addition to providing specific polynomials, 
we also formulate two algorithmic strategies 
for recovering the parameters of local mixtures.
The second algorithm is an extension of Prony's method,
which is a tool used in signal processing for recovering mixtures of Dirac distributions;
see \cite{plonka14:pronystructured} for a survey.

In \ref{sec:applications}, we illustrate the content of the paper numerically
by providing an application
to the reconstruction of piecewise-polynomial functions from Fourier samples.
For smooth functions, the truncated Fourier series provides a good approximation of a signal
due to rapid uniform convergence,
but in the presence of discontinuities,
uniform convergence is lost owing to the occurence of oscillations known as Gibbs' phenomenon
near the singularities.
Possible approaches to circumvent this include auto-adaptive spectral approximation \cite{mhaskarprestin2009}
and Gegenbauer approximation \cite{archibaldgelb2002}.
In contrast, we adopt a piecewise-polynomial model, which accounts for discontinuities.
Such models are frequently employed in schemes for detection
of discontinuities from sampled data \cite{lee1991,wright2010}.
Applying our results on parameter recovery of \ref{sec:recovery}
allows for the reconstruction of piecewise-polynomial functions
from the minimal number of Fourier samples.

The second application comes from local mixture models in statistics \cite{AnayaMarriott2007, anaya-izquierdo2007}.
These are used for samples whose variation has mostly -- but not entirely -- been explained by the model.
The local components added to the model account for the remaining variation.
We express local mixture models as convolutions of a basic model with a local Dirac.
We then proceed to present a numerical example where we estimate the mixing parameters 
using moments coming from a sample of local Gaussians.

\section{Preliminaries}

\subsection{Moments}

The $k$-th moment of the $l$-th order local mixture of a univariate Dirac measure $\dirac{\xi}$ is given by 
\[ m_{l,k} = \xi^k +\sum_{i=1}^{\min\{l,k\}} \alpha\derivc{i} \tfrac{\fac{k}}{\fac{(k-i)}} \xi^{k-i} \]
for some given parameters $\xi$ and $\alpha\derivc{1}, \dots , \alpha\derivc{l}$;
see \cite[Chapter~2]{schwartz1973}.
For example, for the first order mixture with $\alpha\coloneqq\alpha\derivc{1}$, we obtain 
$m_0=1$, 
$m_1=\xi+\alpha$, 
$m_2=(\xi+2 \alpha) \xi$,
$m_3=(\xi+3 \alpha) \xi^2$,
and so on.
Since models focus on some predetermined order $l$, we omit the $l$ in the subscript whenever the order is clear.

In statistics one has $m_0=1$, 
which leads to the restriction $\sum_{j=0}^l\lambda\derivc{i}=1$.
For a local mixture, this translates to the condition that the coefficient of the 
main component,
in this case the coefficient of $\xi^k$, is one;
see also \cite{anaya-izquierdo2007}.
Note that since the components of a local mixture are not necessarily probability distributions
one does not require $\alpha\derivc{i} \geq 0$,
but other semialgebraic restrictions are necessary for statistics,
see for instance \ref{sec:applications2} and \cite{marriott02:localmixtures}.


\begin{Def}
  Let $f_0,\dots,f_d\in\K(x_1,\dots,x_s)$ be rational functions.
  The Zariski-closure of the set parametrized by
  \[
    \{\homog{m_0,\dots,m_d} \in \P^d \mid m_i = f_i(x)\text{ for }x\in\K^s,\ 0\le i\le d \}
  \]
  is a projective variety of dimension at most $s$.
  If the $f_i$ are expressions for the moments of a family of distributions with $s$ parameters,
  every point in the parametrically given set is a \emph{moment vector} of an element of the family
  and the variety is called \emph{moment variety} with respect to $d$ of the family of distributions,
  which we commonly denote by $\variety{X_d}$.
\end{Def}
Throughout this paper, we will denote the homogeneous ideal of $\variety{X_d}$ by $I_d$.
Since in statistics the zeroth moment $m_0$ is equal to 1,
we will often work with the corresponding dehomogenized version of the ideal which we denote by $\tilde{I_d}$.

Even though the distributions we consider are usually defined over the real numbers~$\R$,
since the moments are polynomials or rational functions in the parameters,
it is often convenient to work with the complexification of the moment varieties,
or more generally, to work over any algebraically closed field.
Therefore, unless specified otherwise,
we will assume $\K$ to be an algebraically closed field of characteristic~$0$.

\subsection{Difference functions}

For describing the generators of the moment ideal,
the following notion is useful.
We define $\Delta_r$ as the Vandermonde determinant \[
  \Delta_r = \det \left( X_j^k \right)_{0\le k,j\le r}
  = \prod_{0\le i<j \le r} (X_j - X_i),
\] considered as an element of the polynomial ring $\K[X_0,\dots,X_r]$.
Thus, for $r=1$, the powers of $\Delta_1$ are just the higher order differences
\begin{align*}
  \Delta_1 &= X_1 - X_0,\\
  \Delta_1^2 &= X_1^2 - 2X_0 X_1 + X_0^2,\\
  \Delta_1^3 &= X_1^3 - 3X_0 X_1^2 + 3X_0^2 X_1 - X_0^3.
\end{align*}
We define $\E$ to be the $\K$-linear map between polynomial rings
\begin{equation}\label{mapEE}
  \begin{aligned}
  \E\colon \K[X_0,\dots,X_r] &\longrightarrow \K[M_0,M_1,\dots],\\
  X_0^{a_0}\cdots X_r^{a_r} &\longmapsto M_{a_0}\cdots M_{a_r},
  \end{aligned}
\end{equation}
in which we interpret
$X_0,\dots,X_r$ as abstract random variables
which are viewed as independent replicates of a distribution,
that is,
\[
  \E(X_j^a X_{j'}^{a'}) = \E(X_j^a) \E(X_{j'}^{a'}) = M_a M_{a'}.
\]
The map $\E$ can then be understood as the expectation of random variables,
mapping any random variable
that is a polynomial expression in the variables $X_0,\dots,X_r$
to the abstract moments,
which are polynomial expressions in $M_a$, $a\in\N$.
More formally, this interpretation is captured by the concept of Umbral Calculus;
see, e.\,g.,\ \cite{rota00:umbral} for a brief overview.

With this notation, \[
  \E(X_0^{a_0} \cdots X_r^{a_r}\Delta_r^n)
\] is a homogeneous polynomial of degree $r+1$ in the moments $M_a$, $a\in\N$.
For example, for $r=1$, we get \[
  \E(X_0^{a_0} X_1^{a_1}\Delta_1^n) = \sum_{k=0}^{n} (-1)^k \binom{n}{k} M_{a_0+k}M_{a_1+n-k}.
\]
Note that, in the notation of \cite{eis92:green}, we have
$\E(X_0^{a_0} X_1^{a_1}\Delta_1)=\Delta_{a_0,a_1}$ as well as
$\E(X_0^{a_0} X_1^{a_1}\Delta_1^3)=\Gamma_{a_0,a_1}$.

When working with finitely many variables $M_0,\dots,M_d$,
we will assume that the map $\E$ is restricted to a suitable subspace.


\section{Ideals of local mixtures}

\subsection{Generators of the first order moment ideal}
\label{sec:firstorderdiracs}

In this section, we focus on the case of local mixtures of order~$l=1$.
Our main goal is to find a generating set for the ideal
$I_d \coloneqq \variety{I} (\variety{X}_d)$,
the homogeneous (with respect to the standard grading) ideal of the moment variety. 
Note that this ideal is also homogeneous with respect to
the grading induced by the weight vector $(0,1, \dots , d)$.
The main result we prove here is the following:

\begin{Thm}
\label{main2}
For $d \geq 6$, let $J_d$ be the ideal generated by the $\binom{d-2}{2}$ relations
\[
f_{i,j} \coloneqq (j-i+3)M_iM_{j}-2(j-i+2)M_{i-1}M_{j+1}+(j-i+1)M_{i-2}M_{j+2}, 
\]
for $2 \leq i \leq j \leq d-2$.
Then $J_d$ is equal to $I_d$, the homogeneous ideal of the moment variety.
\end{Thm}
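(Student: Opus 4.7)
The proof separates into the containments $J_d \subseteq I_d$ and $I_d \subseteq J_d$.

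For $J_d \subseteq I_d$, one substitutes the parametrization $M_k = \xi^k + k\alpha\xi^{k-1}$ of $\variety{X}_d$ into each generator $f_{i,j}$. Since $M_aM_b = \xi^{a+b} + (a+b)\alpha\xi^{a+b-1} + ab\,\alpha^2\xi^{a+b-2}$ for any $a,b$, the coefficients of $\xi^{i+j}$ and $\alpha\xi^{i+j-1}$ in $f_{i,j}$ cancel by the telescoping identity $(j-i+3) - 2(j-i+2) + (j-i+1) = 0$, while the coefficient of $\alpha^2\xi^{i+j-2}$ vanishes by the short elementary identity $(j-i+3)ij - 2(j-i+2)(i-1)(j+1) + (j-i+1)(i-2)(j+2) = 0$ obtained by expanding.

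For the reverse containment I would exploit the fact that $\variety{X}_d$ is precisely the tangent developable of the rational normal curve $\veronese{d}(\P^1) \subset \P^d$: the image of $(\xi,\alpha)$ under the moment map is the point $\veronese{d}(\xi) + \alpha\,\veronese{d}'(\xi)$ on the tangent line at $\veronese{d}(\xi)$. This is a classical $2$-dimensional irreducible variety, so $I_d$ is prime of codimension $d-2$. My plan is then a Hilbert-function comparison. As a first step, I would verify that the $f_{i,j}$ are linearly independent and already span $(I_d)_2$: after a weight decomposition by $a+b$, this reduces to the observation that the substitutions of $M_aM_b$ with $a+b=w$ all land in the same two-dimensional subspace $\langle \xi^w + w\alpha\xi^{w-1},\, \alpha^2\xi^{w-2}\rangle$ of $\K[\xi,\alpha]$, giving exactly $\binom{d-2}{2}$ independent relations. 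Next, I would compute a Gröbner basis of $J_d$ under a convenient order (say lex with $M_d > \cdots > M_0$, where $\initial(f_{i,j}) = M_{i-2}M_{j+2}$) and show that the resulting Hilbert series matches that of the tangent developable.

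The main obstacle is that the $f_{i,j}$ are almost certainly not a Gröbner basis on their own: the naive initial ideal $\langle M_{i-2}M_{j+2} : 2 \le i \le j \le d-2\rangle$ defines a Stanley-Reisner complex whose facets are the length-$4$ intervals in $\{0,\ldots,d\}$, so its quotient has Krull dimension $4$ rather than the desired $3$. The S-pair completion must therefore produce further initial monomials that need to be identified, lifted to elements of $J_d$, and finally shown to yield the correct Hilbert series. A more robust fallback is induction on $d$, with base case $d = 6$ handled by direct (possibly computer-assisted) computation; for the inductive step, the generators $f_{i,d-2}$, $2 \le i \le d-2$ — which are precisely the new elements of $J_d$ involving $M_d$ — can be used to rewrite any element of $I_d$ modulo $J_d$ as a polynomial in $\K[M_0,\ldots,M_{d-1}]$, where the hypothesis $J_{d-1} = I_{d-1}$ applies.
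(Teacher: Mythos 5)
Your forward containment $J_d\subseteq I_d$ is correct and is exactly the paper's argument: substituting $M_k\mapsto \xi^k+k\alpha\xi^{k-1}$, the coefficients of $\xi^{i+j}$ and $\alpha\xi^{i+j-1}$ cancel by $(j-i+3)-2(j-i+2)+(j-i+1)=0$, and the $\alpha^2\xi^{i+j-2}$ coefficient vanishes by the identity you state (which, writing $c=j-i$, reduces to $(c+1)\bigl(2(c+2)-(2c+4)\bigr)=0$). Your overall strategy for the reverse containment --- identify $\variety{X}_d$ as the tangent developable, then compare Hilbert functions via an initial ideal of $J_d$ --- is also the same route the paper takes.

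The problem is that the reverse containment is a plan, not a proof, and the step you flag as ``the main obstacle'' is precisely where all the work in the paper lives. You correctly diagnose that the quadratic leading terms alone give a monomial ideal whose quotient has Krull dimension $4$ rather than $3$, so the $f_{i,j}$ are not a Gr\"obner basis; but you stop there. The paper resolves this (Lemma~\ref{SisininJ}) by exhibiting, via explicit S-pair reductions, the additional initial monomials $M_1M_iM_{d-1}$ ($2\le i\le d-2$) and $M_1^2M_{d-1}^2$ in $\initial_\prec J_d$ (for reverse lex with $M_1\prec\dots\prec M_d$ on the dehomogenized ring), then computes the Hilbert series of the resulting monomial ideal $S_d$ via a primary decomposition and inclusion--exclusion, and separately computes the Hilbert series of $\im\mom_d$ by exhibiting an explicit graded basis; both equal $\frac{1-t+t^2}{(1-t)^2}$ under the weighting $\deg M_i=i$, which sandwiches $\initial_\prec\tilde J_d$ and forces equality. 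None of these three computations appears in your proposal, and without them there is no proof. Your fallback induction also has a concrete defect: with $M_d$ largest, the new generators $f_{i,d-2}$ have leading terms $M_{i-2}M_d$ only for $0\le i-2\le d-4$, so reduction eliminates $M_kM_d$ only for $k\le d-4$; monomials supported on $\{M_{d-3},M_{d-2},M_{d-1},M_d\}$ survive, and the claimed rewriting of an arbitrary element of $I_d$ into $\K[M_0,\dots,M_{d-1}]$ modulo $J_d$ does not go through as stated.
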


We remark here that an alternative equivalent set of generators was given in \cite[Section~3]{eis92:green}:
\begin{Thm}
\label{main1}
For $d\ge 6$,
\[
  I_d = \langle \E(X_0^{a_0} X_1^{a_1}\Delta_1^3) \mid 0 \leq a_0 < a_1 \leq d-3 \rangle
\]
is the ideal
generated by $\binom{d-2}{2}$ relations coming from the third powers of Vandermonde determinants.
\end{Thm}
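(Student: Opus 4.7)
The plan is to deduce \ref{main1} from \ref{main2} by showing that the two collections of polynomials generate the same subspace of $\K[M_0,\dots,M_d]_2$. Both sets consist of $\binom{d-2}{2}$ degree-$2$ polynomials that are homogeneous with respect to the weight grading $\deg M_k = k$. Since \ref{main2} yields $I_d = J_d$, which is generated in degree $2$, it suffices to verify (i) that every Vandermonde relation lies in $I_d$ and (ii) that at each weight $s$ the Vandermonde generators and the $f_{i,j}$'s span the same subspace of the weight-$s$ degree-$2$ part of $I_d$.

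For (i), I would substitute the parametrization $M_k = \xi^k + k\alpha\xi^{k-1}$, which gives
\[
  M_{a_0+j}\,M_{a_1+3-j} \;=\; \xi^s + s\alpha\xi^{s-1} + (a_0+j)(a_1+3-j)\,\alpha^2\xi^{s-2},
\]
where $s = a_0+a_1+3$. The alternating sum $\sum_{j=0}^3 (-1)^j\binom{3}{j}$ annihilates the $\xi^s$ and $\alpha\xi^{s-1}$ contributions via $(1-1)^3 = 0$ and annihilates the $\alpha^2\xi^{s-2}$ contribution as well, since its coefficient is the third forward difference of the quadratic polynomial $j \mapsto (a_0+j)(a_1+3-j)$. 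Hence $\E(X_0^{a_0}X_1^{a_1}\Delta_1^3)\in I_d$.

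For (ii), the weight-$s$ degree-$2$ subspace of $\K[M_0,\dots,M_d]$ has basis $\{M_aM_{s-a} : \max(0,s-d)\le a\le\lfloor s/2\rfloor\}$, and the parametric vanishing yields exactly the two independent linear equations $\sum_a c_a = 0$ and $\sum_a c_a\,a(s-a) = 0$, cutting out a subspace of dimension $\lfloor s/2\rfloor - 1$ in the interior range (with the analogous truncation for $s$ near $2d-4$). A direct enumeration shows that the $f_{i,j}$ with $i+j=s$ and the Vandermonde relations with $a_0+a_1+3=s$ each produce exactly this many elements. Linear independence inside each weight class follows from a triangular-pivot argument: for $V_{a_0} := \E(X_0^{a_0}X_1^{a_1}\Delta_1^3)$ ordered by ascending $a_0$, the leftmost nonzero monomial $M_{a_0}M_{a_1+3}$ has coefficient $+1$ and its column index $a_0$ strictly increases with $a_0$; for $f_{i,s-i}$ ordered by ascending $i$, the rightmost nonzero monomial $M_iM_{s-i}$ has coefficient $s-2i+3\ne 0$ and its column index $i$ strictly increases with $i$. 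In each case the coefficient matrix is in echelon form with nonzero pivots, so the generators are independent. Both sets thus form a basis of the weight-$s$ degree-$2$ component of $I_d$; summing over $s$ yields \ref{main1}.

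The most delicate step is the bookkeeping at the boundary weights $s$ close to $2d-4$, where the constraints $j\le d-2$ and $a_1\le d-3$ truncate the index ranges and the dimension count needs a corresponding correction; this is a routine case analysis but is where computational errors are most likely.
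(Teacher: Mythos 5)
Your proof is correct, but it takes a genuinely different route from the one the paper relies on. The paper does not prove \ref{main1} itself: it presents it as a known result and cites the multilinear-algebra and representation-theoretic argument of \cite{eis92:green}, devoting its own combinatorial machinery (initial ideals, primary decomposition, Hilbert series) to the equivalent statement \ref{main2}. You instead take \ref{main2} as input and deduce \ref{main1} by a bidegree-by-bidegree linear algebra comparison: both families are quadratic and homogeneous for the weight grading, the weight-$s$ degree-$2$ component of $I_d$ has dimension $\lfloor s/2\rfloor - 1 - \max(0,s-d)$ (two independent conditions $\sum_a c_a = 0$ and $\sum_a c_a\,a(s-a)=0$, the latter nondegenerate since $a\mapsto a(s-a)$ is injective on $a\le s/2$), and each family supplies exactly that many elements in echelon form with respect to opposite ends of the monomial list $M_aM_{s-a}$ (pivot $M_{a_0}M_{a_1+3}$ with coefficient $1$ for the Vandermonde relations, pivot $M_iM_j$ with coefficient $j-i+3\ne 0$ for the $f_{i,j}$). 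Since $I_d=J_d$ is generated in degree $2$, equality of the degree-$2$ components gives equality of the ideals, and there is no circularity because the paper's proof of \ref{main2} nowhere uses \ref{main1}. Your boundary counts do check out; for instance at $s=2d-4$ both families reduce to the single relation $M_{d-4}M_d-4M_{d-3}M_{d-1}+3M_{d-2}^2$ up to scaling. What your argument buys is an elementary, self-contained justification of the paper's otherwise unproved remark that the two generating sets are ``equivalent''; what Eisenbud's argument buys is independence from \ref{main2} and a structural explanation that does not require already knowing a generating set.
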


More explicitly, \ref{main1} means that the moment ideal
is generated by the $\binom{d-2}{2}$ quadratic relations
\begin{equation}
\label{eiseq}
M_iM_{j+3}-3M_{i+1}M_{j+2}+3M_{i+2}M_{j+1}-M_{i+3}M_j \text{ for } 0 \leq i < j \leq d-3.
\end{equation}

The proof in Eisenbud's paper employs multilinear algebra and representation theory 
to find the generators of the ideal of the variety.
Our proof relies heavily on combinatorics
in order to compute the Hilbert functions of the ideals involved,
as explained below.

In order to prove \ref{main2}, we employ the following strategy.
First, we work with the dehomogenized version $\tilde{J_d}$ and $\tilde{I_d}$ of the ideals,
where we set $M_0=1$.
The ideal $\tilde{J_d}$ can be seen to be contained in $\tilde{I_d}$,
which is shown in \ref{eq:momenteval} below.
Then, we use the grading of the polynomial ring $\K[M_1, M_2, \dots , M_d]$ 
given by the vector $(1, 2, \dots, d)$
as well as combinatorics to show that the ideals in question 
have the same Hilbert series.

\begin{Lem}
\label{SisininJ} 
Let $\prec$ be  the monomial order on $\K[M_1,M_2, \dots , M_d]$ 
that compares monomials 
with the reverse lexicographical order 
with $M_1 \prec M_2 \prec \dots \prec M_d$.
Then the monomial ideal $S_d$ generated by  
\begin{equation*}
\langle M_iM_j \mid 2 \leq i \leq j \leq d-2 \rangle + \langle M_1M_iM_{d-1} \mid 2 \leq i \leq d-2 \rangle + \langle M_1^2M_{d-1}^2 \rangle
\end{equation*}
is contained in $\initial_{\prec}J_d$.
\end{Lem}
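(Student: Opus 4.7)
The plan is to exhibit, for each of the three families of generators of $S_d$, an explicit element of $J_d$ whose leading monomial matches. Throughout I use that the order $\prec$ compares two monomials by finding the largest index at which their exponent vectors disagree and preferring the one with smaller exponent there; since every $f_{i,j}$ is homogeneous of the same weight in the $(1,2,\ldots,d)$-grading, each computation stays inside a single weight class. For the first family, $M_iM_j$ with $2 \le i \le j \le d-2$, I will verify directly that $\initial_\prec f_{i,j} = M_iM_j$: comparing $M_iM_j$ against $M_{i-1}M_{j+1}$ at index $j+1$, and against $M_{i-2}M_{j+2}$ at $j+2$, the monomial $M_iM_j$ has exponent zero at these indices while the competitors do not.

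For the second family, $M_1 M_i M_{d-1}$ with $2 \le i \le d-2$, I treat the subrange $2 \le i \le d-3$ and the boundary $i=d-2$ by parallel S-polynomial reductions. In the subrange I will form
\[
  g_i := (i+1) M_i f_{2, d-2} - (d-1) M_{d-2} f_{2, i} - \tfrac{2(d-1)i}{d-i} M_1 f_{i+1, d-2},
\]
where the first two summands (an S-polynomial of $f_{2,d-2}$ and $f_{2,i}$) cancel $M_2 M_i M_{d-2}$, and the third removes the residual leading term $M_1 M_{i+1} M_{d-2}$. The algebraic identity $-2(i+1)(d-i)(d-2) + 4i(d-1)(d-i-1) = 2d(i-1)(d-i-2)$, verified by direct expansion, shows the coefficient of $M_1M_iM_{d-1}$ in $g_i$ equals $\tfrac{2d(i-1)(d-i-2)}{d-i}$, nonzero for $2 \le i \le d-3$; the only other degree-$3$ survivor, $M_1 M_{i-1} M_d$, is smaller in $\prec$ because it carries positive exponent at the larger index $d$. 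For the boundary case $i=d-2$ I will use the analogous construction built from the pair $f_{2,d-2}, f_{d-2,d-2}$:
\[
  g_{d-2} := 3 M_{d-2} f_{2, d-2} - (d-1) M_2 f_{d-2, d-2} - \tfrac{4(d-1)}{d-2} M_{d-1} f_{2, d-3},
\]
whose leading monomial is $M_1 M_{d-2} M_{d-1}$ with coefficient $\tfrac{2d(d-4)}{d-2}$, nonzero for $d \ge 6$.

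For the final generator $M_1^2 M_{d-1}^2$, I will combine $g_{d-2}$ with $f_{2, d-2}$ itself via
\[
  R := \tfrac{2d(d-4)}{d-2} M_1 M_{d-1} f_{2, d-2} - (d-1) M_2 g_{d-2}.
\]
The scalars are engineered so that the common monomial $M_1 M_2 M_{d-2} M_{d-1}$ cancels exactly between the two summands. After cancellation only two degree-$4$ monomials remain in $R$: the desired $M_1^2 M_{d-1}^2$, contributed with coefficient $-4d(d-4)$ by the middle term $-2(d-2) M_1 M_{d-1}$ inside $f_{2,d-2}$, and the leftover $M_2^2 M_{d-4} M_d$ coming from the third term of $g_{d-2}$. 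Comparing these two at the largest differing index $d$, where $M_1^2 M_{d-1}^2$ has exponent zero, confirms that $M_1^2 M_{d-1}^2$ is the leading monomial of $R$; the coefficient is nonzero for $d \ge 6$, completing the containment $S_d \subseteq \initial_\prec J_d$. The main obstacle I anticipate is the algebraic bookkeeping for the second family, as verifying the nonvanishing coefficient of $M_1 M_i M_{d-1}$ depends on the explicit identity above; the three-summand design of $g_i$ is what keeps the verification to a single expansion rather than a longer reduction chain.
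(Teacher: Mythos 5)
Your strategy is essentially the paper's: the quadratic generators of $S_d$ are read off as the leading terms of the $f_{i,j}$, and the cubic and quartic generators are produced as leading terms of (partially reduced) S-polynomials built from the very same pairs $(f_{2,i},f_{2,d-2})$, $(f_{2,d-2},f_{d-2,d-2})$ that the paper uses; your coefficient identity $-2(i+1)(d-i)(d-2)+4i(d-1)(d-i-1)=2d(i-1)(d-i-2)$ is correct, as are the expansions of $g_{d-2}$ and $R$. The difference is that the paper reduces its S-polynomials fully modulo the generating set, whereas you stop after cancelling a single residual term, and this is where a gap opens up in the second family.

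Concretely, for $2\le i\le d-4$ your element $g_i$ still contains the monomial $M_{i+2}M_{d-2}$ with nonzero coefficient $-(d-1)(i-1)$, inherited from the third term of $f_{2,i}$ and cancelled by nothing. Under the comparison rule you state (all monomials in play share the same weight, and within a weight class one prefers the smaller exponent at the largest differing index), this monomial \emph{beats} your intended leader: the largest index at which $M_{i+2}M_{d-2}$ and $M_1M_iM_{d-1}$ differ is $d-1$, where $M_{i+2}M_{d-2}$ has exponent $0$. So $\initial_\prec g_i=M_{i+2}M_{d-2}$, which is already a degree-two generator of $S_d$ and certifies nothing new. The fix is exactly the further reduction the paper invokes: subtract $\tfrac{-(d-1)(i-1)}{d-i}f_{i+2,d-2}$, trading $M_{i+2}M_{d-2}$ for multiples of $M_{i+1}M_{d-1}$ and $M_iM_d$, both of which do lie below $M_1M_iM_{d-1}$ (compare $M_iM_d$ at index $d$; for $M_{i+1}M_{d-1}$ the $M_{d-1}$'s agree and the largest differing index is $i+1$, where $M_1M_iM_{d-1}$ has exponent $0$). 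The analogous leftovers in $g_{d-2}$ and $R$ (namely $M_{d-2}M_d$, $M_2M_{d-4}M_d$, $M_{d-1}^2$, resp.\ $M_1M_{d-1}M_d$, $M_2M_{d-2}M_d$, $M_2M_{d-1}^2$) do all lose to the intended leading monomials, but you compare against only one competitor in each case and should record the others. If instead you mean $\prec$ to refine the \emph{standard} total degree, so that the quadratic $M_{i+2}M_{d-2}$ is automatically below the cubic $M_1M_iM_{d-1}$, the gap closes -- but that is a different order from the one you describe, and the convention must then be fixed explicitly since the whole lemma hinges on it.
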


\begin{proof}
The degree 2 monomials of $S_d$ are precisely 
the leading terms of the generators of $J_d$ with respect to $\prec$.
For the degree 3 monomials in the generating set of $S_d$, one obtains $M_1M_iM_{d-1}$ for $i=2,3, \dots , d-3$ as the leading term of the S-pair $S(f_{2,i},f_{2,d-2})$ reduced by the elements in the generating set of $J_d$.
Similarly, the monomial $M_1M_{d-2}M_{d-1}$ arises as the leading term of the S-pair $S(f_{2,d-2},f_{d-2,d-2})$ after reducing.
Finally, $M_1^2M_{d-1}^2$ is the leading term 
of a polynomial in the ideal $\langle f_{2,d-2}, f_{2,d-3}, f_{d-2,d-2} \rangle $.
One obtains this polynomial by taking the S-pair of $f_{2,d-2}$ and $f_{d-2,d-2}$, 
reducing by $f_{2,d-3}$, 
and then taking the S-pair of the resulting polynomial with $f_{2,d-2}$.
\end{proof}

Our goal now is to show that the ideal $S_d$ has the same Hilbert series as $\tilde{I_d}$ from which we conclude that it is indeed its initial ideal.

\begin{Lem}
\label{Mprimdec}
The ideal $S_d$ defined in \ref{SisininJ} has a primary decomposition 
\begin{equation}
\langle M_iM_j \mid 1 \leq i \leq j \leq d-2 \rangle \cap \langle M_iM_j \mid 2 \leq i \leq j \leq d-1 \rangle.
\end{equation}
\end{Lem}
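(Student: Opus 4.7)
The plan is to recognize the two factors in the proposed decomposition as squares of prime monomial ideals and then verify the set-theoretic equality by a direct monomial computation. Set $\mathfrak{p} \coloneqq \idealspan{M_1, M_2, \ldots, M_{d-2}}$ and $\mathfrak{q} \coloneqq \idealspan{M_2, M_3, \ldots, M_{d-1}}$. Both are prime because they are generated by subsets of the variables, and it is immediate from the generator lists that the first factor of the proposed decomposition equals $\mathfrak{p}^2$ and the second equals $\mathfrak{q}^2$.

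First, I would establish that $\mathfrak{p}^2$ and $\mathfrak{q}^2$ are primary. For monomial ideals the cleanest argument uses the standard combinatorial criterion: a monomial ideal is $\mathfrak{P}$-primary for a monomial prime $\mathfrak{P}=\idealspan{x_i : i\in I}$ precisely when all variables appearing in its minimal generators lie in $\{x_i : i\in I\}$. This is visibly satisfied by $\mathfrak{p}^2$ and $\mathfrak{q}^2$, so they are $\mathfrak{p}$- and $\mathfrak{q}$-primary respectively, and since $\mathfrak{p}\ne\mathfrak{q}$ the resulting decomposition will automatically be minimal.

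Next, I would compute $\mathfrak{p}^2 \cap \mathfrak{q}^2$ monomial by monomial. For a monomial $m = M_1^{a}\, M_{d-1}^{b}\, M_d^{e}\,\prod_{k=2}^{d-2} M_k^{c_k}$, set $c \coloneqq \sum_{k=2}^{d-2} c_k$. Then membership in $\mathfrak{p}^2$ is equivalent to $a + c \ge 2$ and membership in $\mathfrak{q}^2$ is equivalent to $b + c \ge 2$. A three-case analysis on the value of $c$ (namely $c\ge 2$, $c=1$, and $c=0$) identifies the minimal generators of the intersection as exactly $M_iM_j$ for $2\le i\le j\le d-2$ (from $c\ge 2$), together with $M_1M_iM_{d-1}$ for $2\le i\le d-2$ (from $c=1$, requiring $a\ge 1$ and $b\ge 1$), and $M_1^2M_{d-1}^2$ (from $c=0$, requiring $a\ge 2$ and $b\ge 2$). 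This matches precisely the generating set of $S_d$ given in \ref{SisininJ}.

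The argument is essentially bookkeeping and I do not anticipate any serious obstacle. The only mildly delicate point is checking that the three families of minimal generators of $\mathfrak{p}^2\cap\mathfrak{q}^2$ are genuinely irredundant, which follows from the observation that the three types have distinct support patterns with respect to $M_1$ and $M_{d-1}$ and therefore cannot divide one another.
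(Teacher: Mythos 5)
Your verification of the equality $S_d=\mathfrak{p}^2\cap\mathfrak{q}^2$ is correct and is in essence the same direct monomial bookkeeping as the paper's proof, just packaged more systematically: the paper checks each generator of $S_d$ against both components for one inclusion and then runs a divisibility case analysis on a monomial of the intersection for the other, whereas you reduce membership to the two inequalities $a+c\ge 2$ and $b+c\ge 2$ and split on the value of $c$, which produces all three families of generators of $S_d$ in one sweep. Identifying the two components as squares of the monomial primes $\idealspan{M_1,\dots,M_{d-2}}$ and $\idealspan{M_2,\dots,M_{d-1}}$ is a nice touch that the paper leaves implicit.

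The one point you should repair is the primality criterion. As stated (``a monomial ideal is $\mathfrak{P}$-primary precisely when all variables appearing in its minimal generators lie in the support of $\mathfrak{P}$'') it is false: the ideal $\idealspan{x^2,xy}$ satisfies this condition for $\mathfrak{P}=\idealspan{x,y}$, yet it is not primary, since $\idealspan{x^2,xy}=\idealspan{x}\cap\idealspan{x^2,y}$ and its radical is $\idealspan{x}\ne\mathfrak{P}$. The correct criterion requires, in addition, that a pure power of each variable occurring in a minimal generator lie in the ideal; this does hold for $\mathfrak{p}^2$ and $\mathfrak{q}^2$ because $M_i^2$ is among their generators for every relevant $i$, so your conclusion that both components are primary (with distinct radicals, hence an irredundant decomposition) survives intact. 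The paper does not address primariness at all, so here you are being more thorough than the source, only with a misquoted lemma that needs the extra hypothesis spelled out.
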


\begin{proof}
Set $C=\langle M_iM_j \mid 2 \leq i \leq j \leq d-2 \rangle$.
Then we want to show that
\begin{equation*}
S_d = (\langle M_1M_i \mid 1 \leq i \leq d-2 \rangle)+C) \cap (C+ \langle M_iM_{d-1}\mid 2 \leq i \leq d-1 \rangle).
\end{equation*}

Inclusion of $S_d$ in the other ideal follows by checking that
each generator of $S_d$ is divided by some monomial in each part of the decomposition.

For the other inclusion, take a monomial $m$ in the intersection.
If the monomial is divided by some monomial in $C$, it is clearly in $S_d$.
If not, there are two possibilities.
The first one is that $m$ is divided by both $M_1^2$ and $M_{d-1}^2$, in which case it is divided by the single generator $M_1^2M_{d-1}^2$ of $S_d$.
The other one is that $m$ is divided by two monomials $M_1M_i$ and $M_jM_{d-1}$, with 
$j\geq 2$ and $i \leq d-2$.
In this case, $M_1M_iM_{d-1}$ and $M_1M_jM_{d-1}$ both divide $m$ and at least one of them must be in $S_d$.
\end{proof}

Consider the polynomial moment map
\begin{equation}
\label{mommap}
\begin{aligned}
\mom_d\colon \K[M_1, M_2, \dots , M_d] &\longrightarrow \K[A,X]    \\
M_i &\longmapsto (X+iA)X^{i-1}.
\end{aligned}
\end{equation}
From now on, we consider the grading given by $\deg M_i=i$ 
on $\K[M_1, M_2, \dots , M_d]$
and the standard grading $\deg A = \deg X = 1$ on $\K[A,X]$.
The moment map becomes graded this way.
The ideal $\tilde{I_d}$ is the kernel of this map.
Using the lemmata above, we are able to compare the Hilbert series of the ideals.
 
\begin{proof}[Proof of \ref{main2}]
We show first the equality of the dehomogenized ideals  $\tilde{I}_d$ and  $\tilde{J}_d$.
The inclusion $\tilde{J_d} \subseteq \tilde{I_d}$ follows from the fact that,
for each of the generators of $\tilde{J_d}$,
substituting $M_i$ with its image given by the moment map  evaluates to zero.
Indeed, we have
\begin{equation}\label{eq:momenteval}
\begin{aligned}
& \mom_d((j-i+3)M_iM_{j}-2(j-i+2)M_{i-1}M_{j+1}+(j-i+1)M_{i-2}M_{j+2})  \\
={} &((j-i+3)-2(j-i+2)+(j-i+1))((X^{i+j}+(i+j)AX^{i+j-1}+ijA^2X^{i+j-2}) \\
={} & 0.
\end{aligned}
\end{equation}

We now show equality.
Let $\mom_d$ be the moment map~\labelcref{mommap}
and consider the short exact sequences
\begin{equation*}
0 
\longrightarrow \tilde{I_d} 
\longrightarrow \K[M_1, M_2, \dots , M_d] 
\longrightarrow \im \mom_d 
\longrightarrow 0
\end{equation*}
and
\begin{equation*}
0 
\longrightarrow S_d 
\longrightarrow \K[M_1, M_2, \dots , M_d] 
\longrightarrow  \K[M_1, M_2, \dots , M_d]/S_d
\longrightarrow 0.
\end{equation*}

\begin{Cl} \label{cl1}
Let $d \geq 3$.
For $ n \geq 1$, the vector space in degree $n$ in the image of the moment map has dimension $n$.
Thus, the Hilbert series of the image of the moment map is 
\begin{equation*}
\text{HS} (\im \mom_d) = \frac{1-t+t^2}{(1-t)^2}. 
\end{equation*}
\end{Cl}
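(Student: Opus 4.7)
The plan is to identify $\im\mom_d$ explicitly via a linear change of coordinates on the target. Setting $Y \coloneqq X + A = \mom_d(M_1)$, we have $\K[A,X] = \K[Y,A]$ as graded rings, and the Taylor expansion
\[
  Y^i = (X+A)^i = X^i + iAX^{i-1} + \binom{i}{2}A^2X^{i-2} + \cdots
\]
yields
\[
  \mom_d(M_i) = X^i + iAX^{i-1} \equiv Y^i \pmod{A^2}
\]
for every $i \geq 1$. Hence every generator, and thus all of $\im\mom_d$, lies in the subring $R \coloneqq \K[Y] + A^2\,\K[Y,A]$ of $\K[Y,A]$. A short monomial check shows that $R = \K[Y,A^2,A^3]$: a monomial $Y^aA^k$ lies in $R$ iff $k \neq 1$, and $\{2b+3c : b,c \in \N\} = \N \setminus \{1\}$ is exactly the set of $A$-exponents realised by products $(A^2)^b(A^3)^c$.

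For the reverse inclusion $R \subseteq \im\mom_d$ I would exhibit $A^2$ and $A^3$ in the image via the identities
\[
  A^2 = Y^2 - \mom_d(M_2), \qquad 2A^3 = \mom_d(M_3) + 2Y^3 - 3Y\cdot\mom_d(M_2),
\]
both verified by direct expansion in $\K[A,X]$. This is the only step that uses the hypothesis $d \geq 3$ (to access $M_3$) and characteristic zero (to invert $2$). Combined with $Y = \mom_d(M_1)$, these give $\K[Y,A^2,A^3] \subseteq \im\mom_d$, whence equality.

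Once the identification $\im\mom_d = \K[Y,A^2,A^3]$ is in hand, the dimension count is immediate: a $\K$-basis of its degree-$n$ piece (with respect to $\deg Y = \deg A = 1$) is $\{Y^a A^{n-a} : 0 \leq a \leq n,\ n-a \neq 1\}$, which has cardinality $n$ for every $n \geq 1$. Summing yields
\[
  \operatorname{HS}(\im\mom_d) = 1 + \sum_{n\geq 1} n\,t^n = \frac{1-t+t^2}{(1-t)^2},
\]
as asserted. The only genuinely substantive step is recognising that, after the change of coordinates, the image sits inside the subring in which the coefficient of $A^1$ vanishes; past that, everything reduces to a few explicit polynomial identities.
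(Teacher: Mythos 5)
Your proof is correct, and it takes a genuinely different route from the one in the paper. The paper argues degree by degree: it exhibits the explicit basis $X^n+nAX^{n-1}$, $A^iX^{n-i}$ ($2\le i\le n$) of the degree-$n$ piece and runs an induction (building degree $n+2$ from degrees $n$ and $n+1$), finishing with a separate argument that no element of the image can involve $X^{n+2}$ and $AX^{n+1}$ in any ratio other than $1:(n+2)$. You instead identify the image globally as a ring: after the coordinate change $Y=X+A$ you observe $\mom_d(M_i)\equiv Y^i\pmod{A^2}$, so the image lands in the subring $\K[Y]+A^2\K[Y,A]=\K[Y,A^2,A^3]$, and the explicit identities $A^2=Y^2-\mom_d(M_2)$ and $2A^3=\mom_d(M_3)+2Y^3-3Y\mom_d(M_2)$ (both of which I checked) give the reverse inclusion; the dimension count is then a trivial monomial count. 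Both arguments ultimately rest on the same key fact -- that modulo $A^2$ every product of generators of total degree $n$ is congruent to $Y^n$, which is exactly what the paper's final ``no other combination'' step is verifying -- but your version packages this once and for all, yields the stronger statement $\im\mom_d=\K[Y,A^2,A^3]$ rather than only its Hilbert function, and makes the upper bound on the dimension transparent where the paper's closing argument is rather terse. The hypotheses enter identically in both proofs: $d\ge 3$ is needed to access $M_3$ (equivalently, to reach $A^3$, resp.\ the degree-$3$ base case), and characteristic $0$ to divide by $2$.
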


\begin{Cl} \label{cl2}
Let $d \geq 6$.
The Hilbert series of the graded algebra $\K[M_1, M_2, \dots , M_d]/S_d$ is
\begin{equation*}
\text{HS} (\K[M_1, M_2, \dots , M_d]/S_d) = \frac{1-t+t^2}{(1-t)^2}. 
\end{equation*}
\end{Cl}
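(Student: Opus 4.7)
The plan is to apply the primary decomposition $S_d = A \cap B$ of \ref{Mprimdec}, with $A = \langle M_i M_j \mid 1 \le i \le j \le d-2\rangle$ and $B = \langle M_i M_j \mid 2 \le i \le j \le d-1\rangle$. The short exact sequence of graded modules
\[
0 \longrightarrow \K[M_1,\dots,M_d]/(A \cap B) \longrightarrow \K[M_1,\dots,M_d]/A \oplus \K[M_1,\dots,M_d]/B \longrightarrow \K[M_1,\dots,M_d]/(A+B) \longrightarrow 0
\]
reduces the claim to computing three Hilbert series via the inclusion-exclusion identity
\[
\text{HS}(\K[M_1,\dots,M_d]/S_d) = \text{HS}(\K[M_1,\dots,M_d]/A) + \text{HS}(\K[M_1,\dots,M_d]/B) - \text{HS}(\K[M_1,\dots,M_d]/(A+B)).
\]

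For each quotient, I would enumerate the standard monomials explicitly. For $\K[M_1,\dots,M_d]/A$, they are the monomials containing at most one factor from $\{M_1,\dots,M_{d-2}\}$, i.e.\ products $M_j M_{d-1}^{a_{d-1}} M_d^{a_d}$ with $j \in \{0,1,\dots,d-2\}$; since $\deg M_i = i$ this contributes $\tfrac{1+t+\dots+t^{d-2}}{(1-t^{d-1})(1-t^d)} = \tfrac{1}{(1-t)(1-t^d)}$. A symmetric enumeration for $B$ using $M_1^{a_1} M_k M_d^{a_d}$ with $k \in \{0, 2, 3, \dots, d-1\}$ yields $\tfrac{1-t+t^2-t^d}{(1-t)^2 (1-t^d)}$.

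For $A + B$, the key observation is that it is generated by all $M_i M_j$ with $1 \le i \le j \le d-1$ except $M_1 M_{d-1}$, which lies in neither $A$ (as $d-1 > d-2$) nor $B$ (as $1 < 2$). Hence the standard monomials are $M_d^{a_d}$, $M_k M_d^{a_d}$ for $k \in \{1,\dots,d-1\}$, and $M_1 M_{d-1} M_d^{a_d}$, giving $\tfrac{1+t+\dots+t^d}{1-t^d} = \tfrac{1-t^{d+1}}{(1-t)(1-t^d)}$.

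Putting everything over the common denominator $(1-t)^2(1-t^d)$, the numerator becomes $(1-t) + (1-t+t^2-t^d) - (1-t)(1-t^{d+1}) = 1 - t + t^2 - t^d + t^{d+1} - t^{d+2}$, which factors as $(1-t+t^2)(1-t^d)$. Cancelling $(1-t^d)$ leaves $\tfrac{1-t+t^2}{(1-t)^2}$, as claimed. The main obstacle is the combinatorial bookkeeping for $A+B$: one must confirm that $M_1^2$, $M_{d-1}^2$, and every $M_k M_l$ with $k, l \in \{1,\dots,d-1\}$ and $\{k, l\} \ne \{1, d-1\}$ already lies in $A$ or $B$, so that the three listed families of standard monomials are exhaustive; once this is established, the remainder is routine generating-function manipulation.
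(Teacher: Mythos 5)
Your proposal is correct and follows essentially the same route as the paper: the primary decomposition $S_d = A\cap B$, inclusion--exclusion via the Mayer--Vietoris sequence, and an enumeration of standard monomials for the three quotients. In fact you are slightly more careful than the paper on the third term, since the correct ideal to use is $A+B$, which omits the generator $M_1M_{d-1}$ and hence contributes the extra standard monomials $M_1M_{d-1}M_d^{a_d}$ accounting for the $t^d$ term in the numerator $1+t+\dots+t^d$ that the paper writes down (while labelling the ideal as $\langle M_iM_j \mid 1\le i\le j\le d-1\rangle$).
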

Since $S_d \subseteq \initial_{\prec}\tilde{J_d} \subseteq \initial_{\prec}\tilde{I_d}$, for all $n$ we have the inequalities
\begin{equation} \label{eq462}
\text{HF}(S_d)(n) \leq \text{HF}(\initial_{\prec}\tilde{J_d})(n) \leq \text{HF}(\initial_{\prec}\tilde{I_d})(n) = \text{HF}(\tilde{I_d})(n). 
\end{equation}
Since the two Hilbert functions of $\im \mom_d $ and $\K[M_1, M_2, \dots , M_d]/S_d$ coincide, 
it follows from the two exact sequences above that $\text{HF}(S_d)(n)$ and $\text{HF}(\tilde{I_d})(n)$ are also equal.
Thus, all inequalities in~\labelcref{eq462} are in fact equalities, 
implying that $\tilde{J_d}=\tilde{I_d}$.

By \cite[Section~8.4, Theorem~4]{Cox:2015},
it follows that the Gr\"obner basis of $I_d$ is the homogenized version of the Gr\"obner basis for $\tilde{I_d}$.
This can be obtained in both cases by using the Buchberger algorithm on the corresponding generating set given by \ref{main2} 
to obtain polynomials whose initial terms are the ones given by \ref{SisininJ}. 
The homogeneous version follows, i.\,e., $I_d$ is equal to $J_d$.
\end{proof}

We finally show the two claims we used in the proof of the \namecref{main2}.

\begin{proof}[Proof of \ref{cl1}]
We use induction to show that, for each degree $n$ (recall that we are using the standard grading here),
the corresponding vector space has $n$ basis elements
$X^n+nAX^{n-1}$ and $A^iX^{n-i}$ for $i = 2,3, \dots , n$ and $n \geq 2$.

For $n=1$, the corresponding vector space is generated by $X+A$.

For $n=2$, the only generators are $(X+A)^2$ and the image of $M_2$, that is $X^2+2AX$.
It follows that the vector space in degree $2$ is generated by $X^2+2AX$ and $A^2$.

For $n=3$, there are three generators $X^3+3AX^2$, $(X+A)A^2$ and $(X+A)^3$.
One checks that $(X+A)^3-(X^3+3AX^2)=3A^2X+A^3$ and $(X+A)A^2=XA^2+A^3$, which implies the vector space has a basis consisting of the elements $X^3+3AX^2$, $A^2X$ and $A^3$.

Now assume that the inductive hypothesis is true in degrees $n$ and $n+1$.
Then the basis elements $A^iX^{n+2-i}$ for $i = 4,5, \dots , n$ in degree $n+2$ arise by multiplying the corresponding terms in degree $n$ with $A^2$.
Further, multiplying the terms $A^3X^{n-2}$, $A^2X^{n-1}$ and $X^{n+1}+(n+1)AX^n$ of degree $n+1$
with $X+A$, we obtain $A^3X^{n-1}+A^4X^{n-2}$, $A^2X^{n}+A^3X^{n-1}$ and $X^{n+2}+(n+2)AX^{n+1}+(n+1)A^2X^n$,
which give rise to the remaining three required terms.

It remains to show that there exists no combination
other than $X^{n+2}+(n+2)AX^{n+1}$ 
that involves the terms $X^{n+2}, AX^{n+1}$.
Indeed, any such combination can only arise in the following way: 
we choose $k$ and $l$ between $0$ and $n+2$ such that $k+l=n+2$ and we form the product 
$(X^k+kAX^{k-1})(X+A)^l$.
But this is equal to $X^{n+2}+(n+2)AX^{n+1}+ \text{sum of lower terms}$.
Thus the given set of polynomials is indeed a vector space basis.
\end{proof}

\begin{proof}[Proof of \ref{cl2}]
We use the primary decomposition of the monomial ideal $S_d$ given by \ref{Mprimdec} to compute the Hilbert series. 
By inclusion-exclusion on the primary decomposition, we obtain that $\text{HS} (\K[M_1, M_2, \dots , M_d]/S_d)$ is equal to
\begin{align*}
  &\text{HS} (\K[M_1, M_2, \dots , M_d]/\langle M_iM_j \mid 1 \leq i \leq j \leq d-2 \rangle) \\
  {}+{}&\text{HS} (\K[M_1, M_2, \dots , M_d]/\langle M_iM_j \mid 2 \leq i \leq j \leq d-1 \rangle) \\
  {}-{}&\text{HS} (\K[M_1, M_2, \dots , M_d]/\langle M_iM_j \mid 1 \leq i \leq j \leq d-1 \rangle).
\end{align*}
We compute the three Hilbert series separately.

For the first one, note that the vector space in degree $n$ is spanned by monomials in degree $n$
that use the indeterminates $M_{d-1}$, $M_d$ and maybe one of the $M_i$ for $1 \leq i \leq d-2$ at most once 
(indeed, any pairwise product of them vanishes in the quotient.)
Thus, the cardinality of the vector space is equal to the number of ways of partitioning the number $n$ as a sum
using only the numbers $d-1$, $d$ and at most one of the numbers $1, 2, \dots, d-2$ at most once.
Hence, the first Hilbert series is 
\begin{equation*}
\frac{1}{1-t^{d-1}} \frac{1}{1-t^d}(1+t+\dots + t^{d-2}).
\end{equation*}
A similar argument for the second part yields the series
\begin{equation*}
\frac{1}{1-t} \frac{1}{1-t^d}(1+t^2+t^3+\dots + t^{d-1}),
\end{equation*}
while for the third one we obtain
\begin{equation*}
\frac{1}{1-t^d}(1+t+\dots + t^{d}).
\end{equation*}
Summing up gives :
\begin{align*}
&\frac{1}{1-t^d}\lr{\frac{1}{1-t^{d-1}}(1+\dots + t^{d-2})+\frac{1}{1-t}(1+t^2+t^3+\dots + t^{d-1}) -(1+\dots + t^{d})} \\
={}&\frac{1}{1-t^d}\frac{1}{(1-t)^2}\lr{(1-t) +(1-t)+t^2-t^d-(1-t)(1-t^{d+1})} \\
={}&\frac{1-t+t^2}{(1-t)^2}.
\end{align*}
This finishes the proof of the claimed result.
\end{proof}

\begin{Rem}
We have computed above the Hilbert series of our ideals for a special grading.
Eisenbud in \cite{eis92:green} proved the equivalent result for the standard grading.
For $ d \geq 6$, the Hilbert series of the ideal $I_d$ is
\[\frac{1+(d-2)t+(d-2)t^2+t^3}{(1-t)^3}.\]
\end{Rem}

\begin{Rem}
We apply the methods of \cite{ccmrz2016:cremona}
on Cremona linearizations to simplify the description of the moment ideal.
For this, let $y_0\coloneqq M_0 = 1$ as before
and define a polynomial transformation as \[
  y_i\coloneqq \begin{cases}
    M_i &\text{if $i\le 3$},\\
    M_i - z_i &\text{otherwise},
  \end{cases}
\]
where \[
  z_i\coloneqq \begin{cases}
    M_i &\text{if $i\le 3$},\\
    \frac{1}{2} k(k+1) z_{k-1} z_{k+2} - \frac{1}{2} (k-1)(k+2) z_k z_{k+1} &\text{if $i>3$ and $i = 2k+1$},\\
    k^2 z_{k-1} z_{k+1} - (k-1)(k+1) z_k^2 &\text{if $i>3$ and $i = 2k$}.
  \end{cases}
\]
This map is triangular and thus invertible.
Further, one checks that
in these variables, if $d\ge 3$, the moment variety is defined by the quartic polynomial equation
\begin{equation}\label{eq:discriminant}
  3 y_1^2 y_2^2 - 4 y_1^3 y_3 - 4 y_2^3 + 6 y_1 y_2 y_3 - y_3^2 = 0
\end{equation}
and the equations $0=y_4=y_5=\cdots =y_d$.
This means that the variety is mapped isomorphically
to a surface in a three-dimensional linear subspace.

Note that the polynomial in~\ref{eq:discriminant} is, up to a constant factor,
the discriminant of the polynomial \[
  1 + 3y_1 X + 3y_2 X^2 + y_3 X^3.
\]
Since the discriminant of a polynomial vanishes whenever it has at least one double root,
the moment variety $\variety{X}_3$ is thus parametrized by cubics with a double root.
Recall that this is the tangent variety of the Veronese curve.
It is depicted in \ref{fig:discriminant}.

\begin{figure}[ht]
    \centering
    \includegraphics[width=48mm]{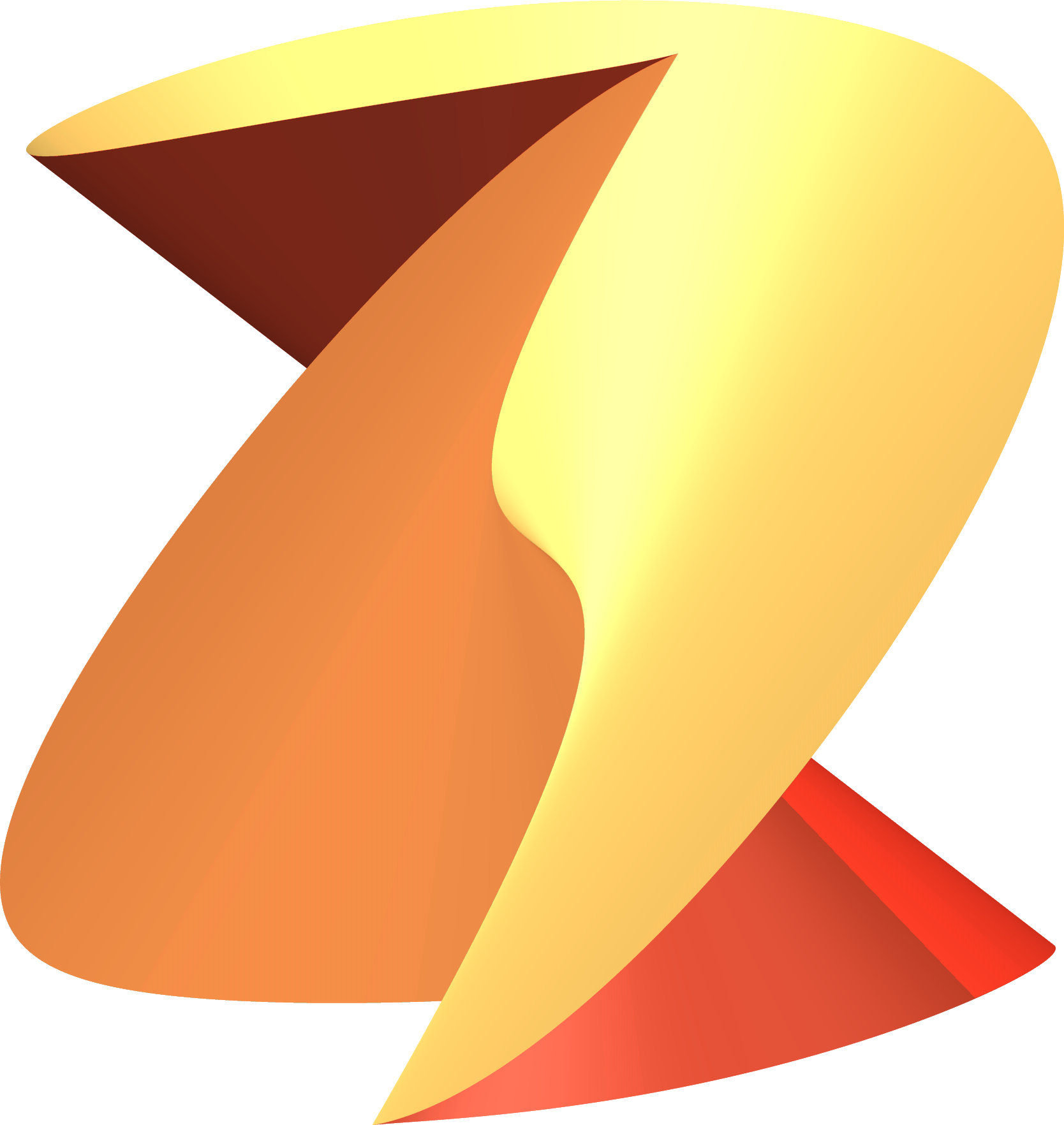}
    \caption{The surface defined by \ref{eq:discriminant}}
    \label{fig:discriminant}
\end{figure}

More generally, up to a projective linear transformation,
the tangent variety of the Veronese curve has the parametrization
\[
  \{ \homog{u^{d-1} v} \in \P^d : u,v\text{ linear forms} \}.
\]
We refer to \cite[Corollary, p.~305]{oedraicu14:tangential}
for the general description of the generators
for the tangential variety of the Veronese variety that also covers the multivariate case,
as well as a generalization to tangential varieties of Segre-Veronese varieties.
\end{Rem}

\subsection{Conjectures for higher orders}

Note that by replacing $i$ with $i+2$, 
the generators of the polynomial in \ref{main2} can be written in the more symmetric form
$(j-i+1)M_{i+2}M_{j}-2(j-i)M_{i+1}M_{j+1}+(j-i-1)M_{i}M_{j+2}$.
Computer experiments with Macaulay2 \cite{M2} seem to suggest the following extrapolation for a $2$-local mixture:

\begin{Conj}
Let $I_{2,d}$ be the ideal of the moments of the second-order local mixture. Then for $d \geq 12$ this ideal is generated by 
\[c_0M_{i+3}M_j+c_1M_{i+2}M_{j+1}+c_2M_{i+1}M_{j+2}+c_3M_{i}M_{j+3}\]
for $i \geq 0$, $j \geq 0$ and $i \geq j-3$, where 
\begin{align*}
c_0&= (j-i+1)(j-i+2) & c_1&=-3(j-i-1)(j-i+2)     \\
c_2&= 3(j-i+1)(j-i-2) & c_3&=-(j-i-1)(j-i-2).        
\end{align*}
The equivalent generators in the notation of \ref{main1} are
\begin{multline*}
\E(X_0^{i} X_1^{j}\Delta_1^5) = \\
M_iM_{j+5}-5M_{i+1}M_{j+4}+10M_{i+2}M_{j+3}-10M_{i+3}M_{j+2}+5M_{i+4}M_{j+1}-M_{i+5}M_j.
\end{multline*}
\end{Conj}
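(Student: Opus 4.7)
The plan is to adapt the strategy used for \ref{main2} to the second-order moment map
\[
  \mom_{2,d}\colon \K[M_1,\dots,M_d] \longrightarrow \K[X,A_1,A_2], \qquad M_i \longmapsto X^i + i A_1 X^{i-1} + i(i-1) A_2 X^{i-2},
\]
which is graded once the source carries $\deg M_i = i$ and the target carries $\deg X = \deg A_1 = 1$ and $\deg A_2 = 2$. The containment $J_{2,d}\subseteq I_{2,d}=\kernel\mom_{2,d}$ is easiest to establish for the $\Delta_1^5$-form: writing $\mom_{2,d}(M_m)=X^{m-2}q(m)$ with $q(m)=X^2+m A_1 X+m(m-1) A_2$ of degree $2$ in $m$,
\[
  \mom_{2,d}(M_{i+k}) \mom_{2,d}(M_{j+5-k})=X^{i+j+1}q(i+k)q(j+5-k)
\]
is at most degree $4$ in $k$ and its fifth finite difference vanishes. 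The $4$-term relations of the statement are in each case proportional to a reindexed $\Delta_1^5$-relation after reducing via $M_aM_b=M_bM_a$ (for instance $\E(X_0^0 X_1^1\Delta_1^5)$ equals $-\tfrac{1}{2}$ times the $4$-term relation at $(i,j)=(0,3)$), so both generating sets lie in $I_{2,d}$ and define the same ideal.

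Passing to the dehomogenized ideals $\tilde J_{2,d}\subseteq\tilde I_{2,d}$ via $M_0=1$ under the weighted grading $\deg M_i=i$, we then match Hilbert series as in \ref{main2}. An analogue of \ref{cl1} should yield $\dim(\im\mom_{2,d})_n$ by an inductive basis construction: in each new degree one enlarges the basis via products with $\mom(M_1)$, $\mom(M_2)$ and the new generator $\mom(M_k)$, tracking the linear dependencies against the known relations. Small-degree calculations give image dimensions $1,1,2,3,5,7,10,13,17,\dots$, consistent with the first non-trivial relation appearing in weighted degree $6$ and matching the rational Hilbert series
\[
  \text{HS}(\im\mom_{2,d}) = \frac{1-t+t^3}{(1-t)^2(1-t^2)}.
\]
An analogue of \ref{SisininJ} then identifies a monomial ideal $S_{2,d}\subseteq\initial_\prec\tilde J_{2,d}$ under a suitable reverse-lex order: its quadratic part is read off from the leading monomials of the $4$-term generators, while higher-degree generators come from a sequence of S-pair reductions at the boundary, exactly as in the first-order proof. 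A primary decomposition of $S_{2,d}$, expected to involve an intersection of ideals of the form $\langle M_iM_j\mid s\le i\le j\le d-s'\rangle$ for several choices of boundary shifts reflecting the three osculating directions, then enables the computation of $\text{HS}(\K[M_1,\dots,M_d]/S_{2,d})$ by inclusion-exclusion.

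Matching the two Hilbert series forces the chain
\[
  \text{HF}(S_{2,d})(n)\le\text{HF}(\initial_\prec\tilde J_{2,d})(n)\le\text{HF}(\initial_\prec\tilde I_{2,d})(n)=\text{HF}(\tilde I_{2,d})(n)
\]
to be equalities, giving $\tilde J_{2,d}=\tilde I_{2,d}$ and hence $J_{2,d}=I_{2,d}$ after homogenization via \cite[Section~8.4, Theorem~4]{Cox:2015}. The main obstacle is the Hilbert-series analysis of the image: since $X$ satisfies the cubic $X^3-3\mom(M_1)X^2+3\mom(M_2)X-\mom(M_3)=0$ over the image, the image is a proper graded subring of $\K[X,A_1,A_2]$, and explicitly enumerating a basis in each weighted degree is significantly more delicate than in the first-order case due to the extra parameter $A_2$ of degree $2$. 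A secondary difficulty is pinning down the correct monomial ideal $S_{2,d}$ together with the S-pair reductions producing its generators; the threshold $d\ge 12$ in the statement is likely the smallest $d$ at which all these reductions become available.
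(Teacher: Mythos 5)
This statement is a \emph{conjecture} in the paper: the authors offer no proof, only the remark that computer experiments support it and that ``one could retrace the steps of the proof of \ref{main2}'', with the main difficulty being the dimension count for the image of the second-order moment map. Your proposal is precisely that retracing, and you correctly identify the same bottleneck -- but you do not resolve it, so what you have written is a research plan rather than a proof. Concretely: the Hilbert series $\text{HS}(\im\mom_{2,d})$ is only guessed from the first few weighted degrees (``should yield'', ``consistent with''), and no analogue of the inductive basis construction of \ref{cl1} is actually carried out; the monomial ideal $S_{2,d}$ is not exhibited, its generators beyond degree two are not produced by explicit S-pair reductions as in \ref{SisininJ}, and its primary decomposition (the analogue of \ref{Mprimdec}) is only ``expected to involve'' certain ideals, so the inclusion-exclusion computation of $\text{HS}(\K[M_1,\dots,M_d]/S_{2,d})$ cannot be performed. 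Every step of the squeeze $\text{HF}(S_{2,d})\le\text{HF}(\initial_\prec\tilde J_{2,d})\le\text{HF}(\tilde I_{2,d})$ therefore remains conditional. The threshold $d\ge 12$ is likewise not established.

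The parts you do execute are sound but only give the easy containment. The verification that $\E(X_0^iX_1^j\Delta_1^5)$ lies in the kernel via the fifth finite difference of the degree-$4$ polynomial $k\mapsto q(i+k)q(j+5-k)$ is correct and is a clean way to see $\tilde J_{2,d}\subseteq\tilde I_{2,d}$. However, your claim that each four-term generator of the statement is \emph{proportional} to a reindexed symmetrized $\Delta_1^5$-relation is false in general: a generic symmetrized $\E(X_0^iX_1^j\Delta_1^5)$ has six distinct quadratic monomials, whereas the proposed generators have four; the coincidence you verify at $(i,j)=(0,3)$ occurs only in the collapsed cases (e.g. $j=i+1$ for the $\Delta_1^5$ family). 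The equivalence of the two generating sets can only hold at the level of the linear spans they generate in each weighted degree, and that too would need an argument. In short, the approach is the right one -- it is the one the paper itself proposes -- but the conjecture remains open after your write-up, because the combinatorial core (image dimensions, $S_{2,d}$, its decomposition) is asserted rather than proved.
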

Although the second set of generators seems more natural than the first one above,
it appears that the methods in \cite{eis92:green} do not immediately extend to the higher order case.
On the other hand, one could retrace the steps of the proof of \ref{main2} to prove this Conjecture.
The main difficulty here 
is finding the dimension of the vector spaces in the image of the moment map
$M_i \mapsto X^i+AiX^{i-1}+B^2 i(i-1) X^{i-2}$.

One can generalize this set of generators for higher orders of mixtures:
\begin{Conj}
Let $I_{l,d}$ be the ideal of the moments of the $l$-th order local mixture.
Then for $d$ sufficiently large, this ideal is generated by the polynomials
\[
  \E(X_0^{a_0} X_1^{a_1}\Delta_1^{2l+1}) =
  \sum_{k=0}^{2l+1} (-1)^k \binom{2l+1}{k} M_{a_0+k}M_{a_1+2l+1-k}.
\]
\end{Conj}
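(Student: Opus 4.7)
The plan is to adapt the four-step strategy behind \ref{main2} to the $l$-th order setting, replacing the differential interpretation and the Hilbert series computations with their higher-order analogues.

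\textbf{Step 1 (Containment).} The moment map for an $l$-th order local Dirac admits the clean description $m_{l,k} = D_\xi(\xi^k)$, where $D_\xi\coloneqq\sum_{i=0}^{l}\alpha_i\,\partial_\xi^i$ is a differential operator of order $l$ (with $\alpha_0=1$). Substituting into a generator gives
\[
  \sum_{k=0}^{2l+1} (-1)^k \binom{2l+1}{k} m_{l,a_0+k}\, m_{l,a_1+2l+1-k}
  = D_x D_y\!\left[x^{a_0} y^{a_1} (y-x)^{2l+1}\right]\Big|_{x=y=\xi},
\]
because $\sum_k (-1)^k\binom{2l+1}{k} x^{a_0+k} y^{a_1+2l+1-k} = x^{a_0}y^{a_1}(y-x)^{2l+1}$. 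The differential operator $D_x D_y$ has total order $2l$, while $(y-x)^{2l+1}$ vanishes to order $2l+1$ along the diagonal $x=y$; so the whole expression vanishes identically in $\xi$ and $\alpha_1,\dots,\alpha_l$. This establishes $\langle\E(X_0^{a_0}X_1^{a_1}\Delta_1^{2l+1})\rangle\subseteq I_{l,d}$.

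\textbf{Step 2 (Hilbert series of the image).} Put the weighted grading $\deg M_i=i$ on $\K[M_1,\dots,M_d]$ and $\deg X=1$, $\deg A_j=j$ on $\K[A_1,\dots,A_l,X]$, so that the moment map $\mom_{l,d}\colon M_i\mapsto\sum_{j=0}^{\min(l,i)}\tfrac{i!}{(i-j)!}A_j X^{i-j}$ (with $A_0=1$) is graded. I would compute the Hilbert series of $\im\mom_{l,d}$ by extending the inductive basis argument of \ref{cl1}: in each weighted degree $n$, single out the distinguished element $m_{l,n}$ (which uniquely carries the $X^n$ leading term) and augment it with the monomial basis of the subspace of $\K[A_1,\dots,A_l,X]$ spanned by products in which at least two indeterminates from $\{A_1,\dots,A_l\}$ occur. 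This yields
\[
  \mathrm{HS}(\im\mom_{l,d}) = \frac{1}{(1-t)\prod_{j=1}^{l}(1-t^j)} - \frac{t}{(1-t)\prod_{j=1}^{l}(1-t^j)} + (\text{correction series}),
\]
where the correction comes from multiple-$A_j$ monomials. This step is the principal obstacle, as already noted after the conjecture: identifying a clean basis of the image requires controlling the $\binom{l+1}{2}$-dimensional family of quadratic syzygies among the $m_{l,i}$'s, and extracting a closed rational form for the series.

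\textbf{Step 3 (Initial ideal and Hilbert series match).} With $\prec$ the reverse lexicographic order and $M_1\prec\cdots\prec M_d$, the leading monomial of $\E(X_0^{a_0}X_1^{a_1}\Delta_1^{2l+1})$ is $M_{a_0}M_{a_1+2l+1}$, so I expect the monomial ideal $S_{l,d}\subseteq\initial_\prec\tilde{J}_{l,d}$ to be built up, as in \ref{SisininJ}, from $\langle M_iM_j\mid 2l\le i\le j\le d-2l\rangle$ together with higher-degree leading monomials arising from iterated $S$-pairs. The analogue of \ref{Mprimdec} should give a primary decomposition
\[
  S_{l,d} = \bigcap_{k=0}^{l}\langle M_iM_j\mid l+1-k\le i\le j\le d-l-k\rangle,
\]
(shape to be verified), whose Hilbert series is computable by inclusion–exclusion exactly as in \ref{cl2}. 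Showing that this equals the series from Step 2 then forces, via the chain $\mathrm{HF}(S_{l,d})\le\mathrm{HF}(\initial_\prec\tilde{J}_{l,d})\le\mathrm{HF}(\tilde{I}_{l,d})$ together with the two short exact sequences, the equalities $S_{l,d}=\initial_\prec\tilde{J}_{l,d}=\initial_\prec\tilde{I}_{l,d}$, and hence $\tilde{J}_{l,d}=\tilde{I}_{l,d}$.

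\textbf{Step 4 (Homogenization).} As in the last paragraph of the proof of \ref{main2}, homogeneity of the generators with respect to both the standard and the weighted grading, combined with \cite[Section~8.4, Theorem~4]{Cox:2015}, upgrades the affine equality to $J_{l,d}=I_{l,d}$. The threshold on $d$ would emerge from Step 3, as it is only when $d$ is large enough that the bookkeeping of leading terms and the resulting primary decomposition stabilise into the uniform pattern above; pinning down this threshold precisely is a secondary, but tractable, combinatorial obstacle.
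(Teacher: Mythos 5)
This statement is an open \emph{conjecture} in the paper: the authors give no proof, and in the surrounding discussion they explicitly identify the main obstruction as ``finding the dimension of the vector spaces in the image of the moment map'' for orders $l\ge 2$. Your proposal is essentially the roadmap the authors themselves suggest (retrace the proof of \ref{main2}), and one piece of it is genuinely complete: your Step~1 containment argument is correct and clean --- writing $m_{l,k}=D_\xi(\xi^k)$ with $D_\xi=\sum_{i=0}^l\alpha_i\partial_\xi^i$, the generator becomes $D_xD_y\bigl[x^{a_0}y^{a_1}(y-x)^{2l+1}\bigr]\big|_{x=y=\xi}$, and a differential operator of total order $2l$ cannot destroy vanishing to order $2l+1$ along the diagonal. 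This is a nice uniform replacement for the direct evaluation in \labelcref{eq:momenteval}.

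However, the proposal is not a proof, and the gap sits exactly where the paper says it does. Your Step~2 ends with an undetermined ``(correction series)'': without a closed rational form for $\mathrm{HS}(\im\mom_{l,d})$ you cannot run the Hilbert-series comparison at all, and the inductive basis argument of \ref{cl1} does not obviously extend, since for $l\ge 2$ the image is no longer spanned by one ``distinguished'' element plus monomials --- the interactions among $A_1,\dots,A_l$ produce relations that are precisely the unknown quantity. Your Step~3 is likewise only a guessed shape (``to be verified''): the degree-$2$ part of $S_{l,d}$ and the proposed primary decomposition are consistent with the $l=1$ case, but you give no S-pair computations producing the higher-degree leading monomials, and the chain of inequalities \labelcref{eq462} only closes if \emph{both} Hilbert series are actually computed and shown equal. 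There is also a structural worry you do not address: the conjectured generators are quadrics, but for $l\ge 2$ it is not established that $I_{l,d}$ is generated in degree $2$ at all (Eisenbud's representation-theoretic argument, which settles this for the tangent variety, is noted in the paper not to extend immediately to osculating varieties). So the proposal should be read as a plausible strategy with one new correct ingredient (Step~1), not as a resolution of the conjecture.
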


\section{Pareto distribution}
\label{sec:pareto}

The Pareto distribution was introduced by Vilfredo Pareto 
as a model for income distribution, see \cite{arnold1983}.
It is a heavy-tailed continuous probability distribution 
that finds a wide range of applications, especially in econometrics.
In the univariate case, its probability density function is
\[
  \varphi(x) \coloneqq \frac{\alpha\xi^{\alpha}}{x^{\alpha+1}} \indicator{\{x\ge\xi\}},
\]
where $\alpha,\xi \in\R_{>0}$.
The moments of this distribution are given by
\[
  m_i = \begin{cases}
    \frac{\alpha}{\alpha-i} \xi^i, & i < \alpha,\\
    \infty, & i \ge\alpha;
  \end{cases}
\]
see \cite{arnold1983}.
We show below how to reparametrize them so that the moments of the Pareto are the inverses of the first order local mixture 
of Diracs
and exploit this fact to obtain generators of the Pareto moment ideal.

\subsection{Ideal generators}

Algebraically, we are interested in the cases for which the moments are finite,
which are described by the image of the map
\begin{align*}
  \R_{>d} \times \R_{>0} &\longrightarrow \P^d,\\
  (\alpha,\xi) &\longmapsto \homog{m_0,\dots,m_d} = \homog{\frac{\alpha}{\alpha-i} \xi^i}_{0\le i\le d},
\end{align*}
for a given $d\in\N$, where $\P^d$ denotes the projective space over $\C$.
We define the moment variety of the Pareto distribution as the Zariski-closure over $\C$
of the image of the above map.
Since $\R_{>d}$ is Zariski-dense in $\C$, we may extend the domain of the parametrization to
$(\C\setminus\{0,\dots,d\}) \times \C\units$
without changing the Zariski-closure of the image.
Let $\rho$ be the corresponding map $\rho\colon (\C\setminus\{0,\dots,d\}) \times \C\units \to \P^d$.

\begin{Prop}
\label{birational}
  Let $\variety{Y} \coloneqq \closure{\im(\rho)} \subseteq\P^d$ be the Pareto moment variety
  and $\variety{X}\subseteq\P^d$ the moment variety of $1$-local mixtures,
  that is, the tangent variety of the Veronese curve.
  Then $\variety{X}$ and $\variety{Y}$
  are birationally equivalent via the rational map
  \[
    \psi\colon \P^d \ratmap \P^d, \quad \homog{m_0,\dots,m_d} \longmapsto \homog{m_0^{-1},\dots,m_d^{-1}}.
  \]
  \begin{proof}
    We change the parametrization of the Pareto moment variety via the bijective map
    \begin{align*}
      \eta\colon \{ (\alpha,\xi) \in \C\units \times \C\units \mid
        -\xi \alpha^{-1} \neq 1,\dots,d
      \} &\longrightarrow
      (\C\setminus\{0,\dots,d\}) \times \C\units, \\
      (\alpha,\xi) &\longmapsto \left(-\xi \alpha^{-1}, \xi^{-1}\right),
    \end{align*}
    which leaves $\variety{Y}$ as the closure of the image of $\rho\compose\eta$ unchanged.
    With this parametrization, the moments are of the form
    \[
      \homog{m_0,\dots,m_d} = \rho(\eta(\alpha,\xi))
      = \homog{\frac{-\xi\alpha^{-1}}{-\xi\alpha^{-1} - i}\ \xi^{-i}}_{0\le i\le d}
      = \homog{\frac{1}{\xi^i + i \alpha \xi^{i-1}}}_{0\le i\le d},
    \]
    so $\psi$ maps points from the image of $\rho\compose\eta$
    to moment vectors of $1$-local mixtures, that is, to points on $\variety{X}$.
    Then $\psi\restrict{\variety{Y}}\colon \variety{Y}\ratmap \variety{X}$ is a rational map
    that is an isomorphism on the dense subset
    $\im(\rho\compose\eta)$ of $\variety{Y}$, as $\im(\rho\compose\eta) \subseteq \{m_i \neq 0\}$.
    Being the tangent variety of an irreducible variety,
    $\variety{X}$ is irreducible by \cite[Section~8.1]{landsberg:tensors}.
    Thus,
    the image of $\psi\restrict{\variety{Y}}$ is dense in $\variety{X}$ which proves the claim.
  \end{proof}
\end{Prop}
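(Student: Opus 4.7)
The plan is to exhibit an explicit reparametrization of the Pareto moment map under which the moments of the Pareto distribution become, componentwise, the reciprocals of the moments of the first-order local Dirac mixture. Once this is achieved, the map $\psi$ of componentwise inversion becomes an isomorphism on a dense open subset of $\variety{Y}$ landing in $\variety{X}$, and birationality follows from irreducibility.

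Concretely, I would first introduce a change of parameters $\eta\colon (\alpha,\xi) \mapsto (-\xi\alpha^{-1}, \xi^{-1})$ defined on a suitable dense open subset of $\C\units \times \C\units$ (the one where $-\xi\alpha^{-1}\notin\{0,\dots,d\}$, so that the image lands in the domain of $\rho$). This map is a bijection onto its image (its inverse is straightforward to write down), and since both its source and image are Zariski-dense in $\C^2$, the closure of $\im(\rho\compose\eta)$ equals $\variety{Y}$. Substituting into the Pareto moment formulas then yields, after clearing the common factor $-\xi\alpha^{-1}$ in numerator and denominator, the identity
\[
  \rho(\eta(\alpha,\xi)) = \homog{\frac{1}{\xi^i+i\alpha\xi^{i-1}}}_{0\le i\le d}.
\]
This is the central computation; it is routine but is the one place where one has to actually check that the algebra works out, so this is the step I would do most carefully.

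Next I would observe that the denominators appearing here are precisely the moments of the first-order local Dirac mixture as given in Section 2.1, so $\psi$ maps $\rho(\eta(\alpha,\xi))$ to the point $\homog{\xi^i+i\alpha\xi^{i-1}}_{0\le i\le d}$ of $\variety{X}$. Thus $\psi$ restricts to a morphism from the dense open subset $U\coloneqq\im(\rho\compose\eta)\cap\{m_i\ne 0\text{ for all }i\}$ of $\variety{Y}$ into $\variety{X}$. Since $\psi$ is an involution of $\P^d$ wherever it is defined, its restriction is automatically invertible on $U$, and hence defines a rational map $\variety{Y}\ratmap\variety{X}$ whose image contains the dense subset $\psi(U)$.

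The final step is to upgrade dominance to birationality. For this I would invoke the fact that $\variety{X}$, being the tangent variety of the irreducible Veronese curve, is itself irreducible (\cite[Section~8.1]{landsberg:tensors}), so that $\psi\restrict{\variety{Y}}$ is dominant. Combined with the involutive nature of $\psi$, this yields a rational inverse $\psi\restrict{\variety{X}}\colon \variety{X}\ratmap\variety{Y}$, proving the birational equivalence. The only genuinely subtle point in the argument is bookkeeping the open sets on which $\eta$, $\rho$, and $\psi$ are all simultaneously well-defined and invertible — once that is set up, the claim is immediate.
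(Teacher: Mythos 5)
Your proposal is correct and follows essentially the same route as the paper: the same reparametrization $\eta\colon(\alpha,\xi)\mapsto(-\xi\alpha^{-1},\xi^{-1})$, the same computation identifying the reparametrized Pareto moments as reciprocals of the $1$-local Dirac moments, and the same appeal to irreducibility of the tangent variety to conclude birationality. The only (harmless) difference is that you make the involutive nature of $\psi$ and the bookkeeping of open sets slightly more explicit than the paper does.
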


\begin{Thm}\label{thm:paretogenerators}
  For $d \geq 6$, let $\tilde{I}_d^{\textnormal{inv}} \subseteq R\coloneqq \C[M_1,\dots,M_d]$
  be the ideal generated by the $\binom{d-2}{2}$ polynomials
  \begin{equation*}
    (j-i+3)M_{i-2}M_{i-1}M_{j+1}M_{j+2}-2(j-i+2)M_{i-2}M_{i}M_{j}M_{j+2}+(j-i+1)M_{i-1}M_{i}M_{j}M_{j+1}
  \end{equation*}
  for $2 \leq i \leq j \leq d-2$, where we assume $M_0=1$.
  Then the affine Pareto moment ideal is equal to the saturation
  \[
    \saturation{\tilde{I}_d^{\textnormal{inv}}}{(M_1\cdots M_d)}.
  \]
\end{Thm}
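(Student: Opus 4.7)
The plan is to transfer the generators of the $1$-local mixture moment ideal from \ref{main2} through the birational map $\psi$ of \ref{birational}, and then descend the resulting localized identity back to $R$. Let $R_{\mathrm{loc}} \coloneqq R[(M_1\cdots M_d)^{-1}]$ and let $U = \{M_1\cdots M_d\neq 0\}\subseteq\mathbb{A}^d$, where as throughout we dehomogenize by $M_0=1$. On $U$ the Cremona map $\psi\colon M_k\mapsto M_k^{-1}$ is a regular involution. A direct substitution shows that replacing each $M_k$ by $M_k^{-1}$ in the generator $f_{i,j}$ of $\tilde I_d$ from \ref{main2} and clearing denominators by $M_{i-2}M_{i-1}M_iM_jM_{j+1}M_{j+2}$ (using $M_0=1$ when $i=2$) recovers exactly the polynomial $g_{i,j}$ listed in the statement. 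Since the clearing monomials are units in $R_{\mathrm{loc}}$, this yields
\[
  \tilde I_d^{\textnormal{inv}}\cdot R_{\mathrm{loc}} \;=\; \psi^{\ast}(\tilde I_d)\cdot R_{\mathrm{loc}}.
\]

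By \ref{main2}, $\tilde I_d$ is the affine vanishing ideal of $\variety{X}\cap U$, and by \ref{birational}, $\psi$ restricts to a biregular isomorphism between $\variety{Y}\cap U$ and $\variety{X}\cap U$. Consequently $\psi^{\ast}(\tilde I_d)\cdot R_{\mathrm{loc}}$ equals $I(\variety{Y}\cap U)\cdot R_{\mathrm{loc}}$, which in turn equals $I(\variety{Y})\cdot R_{\mathrm{loc}}$ since $\variety{Y}\cap U$ is dense in $\variety{Y}$; here $I(\variety{Y})\subseteq R$ denotes the affine Pareto moment ideal. Contracting the equality $\tilde I_d^{\textnormal{inv}}\cdot R_{\mathrm{loc}} = I(\variety{Y})\cdot R_{\mathrm{loc}}$ back to $R$ via the standard identification $I R_{\mathrm{loc}}\cap R = I:(M_1\cdots M_d)^\infty$ gives
\[
  \tilde I_d^{\textnormal{inv}} : (M_1\cdots M_d)^\infty \;=\; I(\variety{Y}) : (M_1\cdots M_d)^\infty.
\]

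The saturation on the right is inert: $\variety{Y}$ is irreducible, being the closure of the image of the irreducible set $(\mathbb{C}\setminus\{0,\dots,d\})\times\mathbb{C}^{\times}$ under $\rho$, and its generic point has every coordinate nonzero, so no minimal prime of $I(\variety{Y})$ contains any~$M_k$. Hence $I(\variety{Y}):(M_1\cdots M_d)^\infty = I(\variety{Y})$, completing the proof. The principal subtlety lies precisely in this descent from the torus to $\mathbb{A}^d$: one must ensure that no primary component of $\tilde I_d^{\textnormal{inv}}$ supported on $V(M_1\cdots M_d)$ contaminates the answer, which is handled automatically by the saturation, and dually that the Pareto variety has no extraneous component along a coordinate hyperplane, which is secured by the irreducibility and generic nonvanishing above.
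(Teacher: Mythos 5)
Your proof is correct and follows essentially the same route as the paper: transfer the generators of $\tilde I_d$ through the involution $M_k\mapsto M_k^{-1}$, clear denominators to obtain $\tilde I_d^{\textnormal{inv}}$, use the birational equivalence of \ref{birational} to identify the image with the Pareto variety, and recover the affine ideal as the saturation. The only cosmetic difference is that you work directly in the localization $R[(M_1\cdots M_d)^{-1}]$ and use $IR_{\mathrm{loc}}\cap R = \saturation{I}{(M_1\cdots M_d)}$, whereas the paper encodes the same localization via the auxiliary variable $y$ with $M_1\cdots M_d\, y - 1$ and cites the elimination theorem; your explicit check that $I(\variety{Y})$ is already saturated is a welcome addition that the paper leaves implicit.
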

\begin{proof}
  Let $\tilde{I}_d \subseteq \C[M_1,\dots,M_d]$ be the dehomogenization of
  the moment ideal of local mixtures of Diracs
  which was studied in \ref{sec:firstorderdiracs}.
  In order to restrict to the algebraic torus where the rational map $\psi$
  given in \ref{birational} is defined,
  consider $J\coloneqq R[y] \tilde{I}_d + \idealspan{M_1\cdots M_d y - 1} \subseteq R[y]$.
  The restriction of the map $\psi$ to the torus agrees with
  the torus automorphism induced by the homomorphism
  \begin{align*}
    \bar{\psi}\colon R[y] &\longrightarrow R[y],\\
    y &\longmapsto M_1\cdots M_d,\\
    M_i &\longmapsto M_1\cdots M_{i-1} M_{i+1}\cdots M_d y.
  \end{align*}
  Note that we can choose an ideal $I'\subseteq R$ such that
  $\bar{\psi}(J) = R[y] I' + \idealspan{M_1\cdots M_d y -1}$
  by observing that, for any $f\in \tilde{I}_d$,
  we can choose a suitable $k\in\N$ such that
  $(M_1\cdots M_d)^k \bar{\psi}(f) \equiv f'\pmod{\idealspan{M_1\cdots M_d y -1}}$
  for some $f'\in R$.
  In particular, this construction establishes a bijection between the generating set of $\tilde{I}_d$
  given in \ref{main2}
  and the generating set of $\tilde{I}_d^{\textnormal{inv}}$.
  Therefore, we choose $I'\coloneqq \tilde{I}_d^{\textnormal{inv}}$.
  In order to describe the affine closure of the image of $\psi$,
  we intersect $\bar{\psi}(J)$ with $R$,
  which is equal to
  \[
    \bar{\psi}(J)\cap R =
    \saturation{\tilde{I}_d^{\textnormal{inv}}}{(M_1\cdots M_d)}
  \]
  by \cite[Theorem~4.4.14]{Cox:2015} from which we conclude.
\end{proof}

The statement of the \namecref{thm:paretogenerators} also holds
for the ideal we get
if, in the construction, we replace the generators of \ref{main2} by those in \labelcref{eiseq}.
Taking the saturation in the construction is necessary
in order to prevent the variety from containing additional irreducible components
that are supported on the boundary of the algebraic torus,
which cannot be part of the Pareto moment variety.
In computations we carried out, 
the ideal $\tilde{I}_d^{\textnormal{inv}}$ 
has a smaller primary decomposition 
and performs faster with computer algebra systems
when compared with its counterpart implied by \labelcref{eiseq}.

\section{Recovery of Parameters}
\label{sec:recovery}

In this section we use the method of moments \cite{Bowman06} 
to estimate the parameters of mixtures of Dirac local mixture distributions.
In statistics, one often starts with a measurement from a population or a sample following a particular distribution.
From this, one can compute the empirical moments (or equivalently the cumulants)
and try to deduce the original parameters of the underlying distribution.
By contrast, in signal processing, one usually obtains the moments of a measure directly using Fourier methods.

\subsection{Cumulants}

So far in this paper, we focused on moment coordinates.
Cumulants are an alternative set of associated quantities 
that are well-known to statisticians
and have recently become an object of study for algebraists.
The cumulants $k_i$ of a distribution can be given as an invertible polynomial transformation of the moments and they carry the same information. 
However, for many interesting distributions studied in the literature, 
they have a simpler form than the moments 
and doing computations with them empirically turns out to be faster,
which becomes very useful when Gr\"obner basis computations are involved.
In the univariate case, let 
\[M(t)=\sum_{i \geq 0}^{\infty}m_i\frac{t^i}{\fac{i}} \quad \text{ and } \quad  K(t)=\sum_{i \geq 0}^{\infty}k_i\frac{t^i}{\fac{i}}\] 
be the moment and cumulant generating functions respectively.
One can symbolically compute the relationship between moments and cumulants 
using the relations
\[M(t)=\exp K(t)  \quad \text{ and } \quad K(t)=\log M(t).\]
For example, up to degree $5$, we obtain
\begin{equation} \label{cumulants}
  \begin{aligned}
k_0&=0,                      \\
k_1&=m_1,            \\
k_2&=m_2-m_1^2,           \\
k_3&=m_3-3m_1m_2+2m_1^3,    \\
k_4&=m_4-4m_1m_3-3m_2^2+12m_1^2m_2-6m_1^4, \\
k_5&=m_5-5m_1m_4-10m_2m_3+20m_1^2m_3+30m_1m_2^2-60m_1^3m_2+24m_1^5,       
  \end{aligned}
\end{equation}
and more generally 
$ k_d = m_d +p_d(m_1, m_2, \dots, m_{d-1})$,
where $p_d$ is some polynomial.
In particular, the first cumulant is the mean
and the second cumulant the variance of the distribution.

Notice that the cumulants are given in a triangular form
and can therefore be inverted to give the moments as functions thereof.
Precise (and in our opinion beautiful) combinatorial formulas giving the cumulants as functions of the moments can be found in Chapter 4 of \cite{zwiernik2015semialgebraic}.

Another useful property of cumulants is translation invariance:
Adding a quantity $q$ to each element in a sample only increases the first cumulant by $q$, while all higher cumulants remain the same.
One often exploits this property by assuming that 
the mean $m_1=k_1$ is zero.

\subsection{Identifiability of finite mixtures by elimination theory}
\label{sec:recoveryElim}

In this subsection we apply elimination theory \cite{Cox:2015} 
to recover the parameters of a mixture distribution.
We transform polynomials into their cumulant versions 
instead of moments because computations are significantly faster.
We present an algorithm to do this and 
explicitly perform computations
in the case of a mixture with two components 
using the computer algebra system Macaulay2 \cite{M2}. 

In this case, the $i$-th moment is given by
\[m_i = \lambda(\xi_1^i+i\alpha_1\xi_1^{i-1})+(1-\lambda)(\xi_2^i+i\alpha_2\xi_2^{i-1}).\]



Transforming into cumulants by using the equations~\labelcref{cumulants},
we obtain polynomials
\begin{equation}
\label{52}
  \begin{aligned}
    &f_1 = -k_1 + \lambda(\xi_1+\alpha_1)+(1-\lambda)(\xi_2+\alpha_2), \\
    &f_2 = -k_2 + \lambda(\xi_1^2+2\alpha_1\xi_1)+(1-\lambda)(\xi_2^2+2\alpha_2\xi_2)\\ 
    & \quad {} -(\lambda(\xi_1+\alpha_1)+(1-\lambda)(\xi_2+\alpha_2))^2,\\
    &f_3= -k_3 + \lambda(\xi_1^3+3\alpha_1\xi_1^2)+(1-\lambda)(\xi_2^3+3\alpha_2\xi_2^2)\\ 
    &\quad -3(\lambda(\xi_1+\alpha_1)+(1-\lambda)(\xi_2+\alpha_2))(\lambda(\xi_1^2+2\alpha_1\xi_1)+(1-\lambda)(\xi_2^2+2\alpha_2\xi_2)) \\
    & \quad +2( \lambda(\xi_1+\alpha_1)+(1-\lambda)(\xi_2+\alpha_2))^3,
  \end{aligned}
\end{equation}
and so on.
Instead of using elimination theory directly on the variables $\xi_i$,
we rather look at their symmetric polynomials $\xi_1 \xi_2$ and $\xi_1+\xi_2$, 
as was done in \cite{afs16:gaussians}.

Define a polynomial $f_s=s-(\xi_1+\xi_2)$ and consider the ideal generated by $\langle f_1, \dots f_5, f_s \rangle$. 
By eliminating the variables $\alpha_1,\alpha_2, \xi_1, \xi_2, \lambda$,
we obtain an ideal generated by the single polynomial
\begin{align*}
&g_s=(4k_2^3+k_3^2)s^4 \\ 
& -(32k_1k_2^3+24k_2^2k_3+8k_1k_3^2+4k_3k_4
     )s^3\\
     &+(96k_1^2k_2^3+24k_2^4+144k_1k_2^2k_3+24k_1^2k_3^2+36k_2k_3^
     2+20k_2^2k_4+24k_1k_3k_4+4k_4^2 \\
     & \quad +2k_3k_5)s^2\\
     &-(128k_1^3k_2^3+96k_1k_2^4+288k_1^2k_2^2k_3+32k_1^3k_3^2+80k_2^3k_3+144k_1k_2k_3^2+
     80k_1k_2^2k_4\\
     & \quad  +48k_1^2k_3k_4 +8k_3^3
     +40k_2k_3k_4+16k_1k_4^2+8k_2
     ^2k_5+8k_1k_3k_5+4k_4k_5)s \\
     &+(64k_1^4k_2^3+96k_1^2k_2^4+192k_1^3k
     _2^2k_3+16k_1^4k_3^2+160k_1k_2^3k_3+144k_1^2k_2k_3^2+80k_1^2k_2^
     2k_4 \\
     & \quad  +32k_1^3k_3k_4 
     +72k_2^2k_3^2+16k_1k_3^3+80k_1k_2k_3k_4+16k_
     1^2k_4^2+16k_1k_2^2k_5+8k_1^2k_3k_5 \\ 
     & \quad 
     +4k_3^2k_4 
     +16k_2k_3k_5
     +8k_1
     k_4k_5+k_5^2)
\end{align*}
that has degree 4 as a polynomial in the variable $s$ over $\K[k_1,\dots,k_5]$.



If we substitute numerical values of the first five cumulants, we can algebraically identify the parameters of the distribution.
Indeed, for every solution for $s$ of the polynomial $g_s$ above, we uniquely recover the original parameters as follows. 
First, we eliminate the variables $(\alpha_1, \alpha_2, \xi_1, \xi_2, \lambda)$ from the ideal 
$\langle f_1, \dots , f_5, f_s, f_p \rangle$,
where $f_p = p-\xi_1\xi_2$.
Inside the generators, we find the polynomial
\[(2sk_2-2k_3)p+6sk_2^2-s^2k_3-10k_2k_3+2sk_4-k_5 \]
which is linear in $p$ and thus we can substitute all values to identify $p$,
which allows us to obtain a pair of values for $\xi_1$ and $\xi_2$.

In order to determine the remaining parameters $\lambda,\alpha_1,\alpha_2$,
define 
\[\lambda_1' \coloneqq \lambda\alpha_1,\qquad \lambda_2'\coloneqq (1-\lambda)\alpha_2\]
and observe that, in these new parameters,
the moments depend only linearly on $\lambda, \lambda_1', \lambda_2'$,
as we have
\[
  m_i = \lambda \xi_1^i + \lambda_1' i \xi_1^{i-1}
  + (1-\lambda) \xi_2^i + \lambda_2' i \xi_2^{i-1}.
\]
Thus, from a computational point of view,
the main difficulty lies in finding the points $\xi_1,\xi_2$.

%

This procedure also implies that the cumulant (respectively moment) map sending $(\alpha_1, \alpha_2, \xi_1, \xi_2, \lambda)$ to $(k_1, \dots , k_5)$ is generically four-to-one. 

What is further, using the first six cumulants allows us to rationally identify the parameters. 
Indeed, let $f_6$ be the polynomial as in \ref{52} that contains the information about the sixth cumulant.
By eliminating the five variables $\alpha_1,\alpha_2, \xi_1, \xi_2, \lambda$ 
from $\langle f_1, \dots, f_6, f_s \rangle$  
we obtained an ideal that contains a polynomial of degree $1$ in $s$.
This polynomial has $412$ terms and is too long to report here.
In this case the cumulant map is one-to-one.

Using the polynomials in this section, one can substitute the values of the cumulants coming from any sample of a two component mixture and recover the parameters.
The process we describe above can potentially be adjusted for mixtures of Dirac local mixtures
with any number of components and local mixture depth.

We phrase the above process for a mixture of two first order local Diracs 
as an algorithmic strategy for parameter estimation. 
We remark here that that we write down the following primarily as an experimental process
and we do not provide a proof that it works in all cases.

\begin{algorithm}
\caption{Parameter recovery for local Dirac mixtures with $r$ components of order $l$}\label{algo1}
\begin{algorithmic}[1]
  \Require The order $l\ge 0$ of the mixture components,
  the number of mixture components $r\ge 1$
  and the moments $m_0=1,m_1,\dots,m_{(l+2)r}$.
  \Ensure The parameters $\xi_j$, $\lambda_j$ and $\alpha\derivc{jk}$
  for $1\le j\le r$, $1\le k \le l$.
\State Let $f_1, \dots , f_{(l+2)r-1}$ be polynomials whose zeros give the first $(l+2)r-1$ moments (or cumulants) as functions of the parameters,
as for example in \ref{52}.
\State Let $h_{p_i}$ be the $i$-th elementary symmetric polynomial on $\xi_j$ for $i=1, 2, \dots, r$,
so for example $h_{p_1}=\xi_1 + \dots + \xi_r$ and $h_{p_r}=\xi_1 \dots \xi_r$, and set $f_{p_i}=p_i-h_{p_i}$. 
\State Eliminate the parameter variables $\xi_j$, $\lambda_j$ and $\alpha\derivc{jk}$ from the ideal 
\[\langle f_1, \dots , f_{(l+2)r-1}, f_{p_1}\rangle\]
to obtain a polynomial $g_1$ in $\K[m_1, \dots , m_{(l+2)r-1}][p_1]$.
\State Using separately each of $f_{p_2}, f_{p_3}, \dots, f_{p_r}$, obtain polynomials $g_2, g_3, \dots, g_r$ in $\K[m_1, \dots , m_{(l+2)r-1}][p_1,p_i]$ for $i=2, 3, \dots , r$.
\State Substitute the values for the moments (or cumulants) from a sample and solve the equations to get a list of potential values for the $p_i$.
\State Use the values of the $p_i$ to obtain the $\xi_j$.
From those, the rest of the parameters can be obtained.
\State\label{algo1:weights}%
Use some method to choose the parameters that best fit the sample,
such as the additional moment $m_{(l+2)r}$.
\end{algorithmic}
\end{algorithm}
In case of a probability distribution,
a practical thing to do in Step~\labelcref{algo1:weights} would be to check, for which $((l+2)r-1)$-tuples
of $\xi_j$, $\lambda_j$ and $\alpha\derivc{jk}$, the $\lambda_j$ are real numbers between  $0$ and $1$ and discard the rest of the solutions.
Then one can check which parameter set gives an $(l+2)r$-th moment that is closest to the empirical moment $m_{(l+2)r}$.

\subsection{Prony's method}

In the following, we describe an algorithm for parameter recovery that is motivated by Prony's method.
Prony's method is a widely used tool in signal processing
that is useful for recovering sparse signals from Fourier samples
and dates back to \cite{prony1795}.
Here, we closely follow the discussion of Prony's method in \cite{mourrain17:polyexp}
because it covers the case of multiplicities.
The variant that we use is summed up in \ref{thm:pronyUnivLocal} below.

We first fix some notation.
Denote by $\K[X]_{\le a}$ the vector subspace of polynomials of degree at most $a\in\N$
and let $\K[X]\dual \coloneqq \Hom(\K[X],\K)$ be the dual $\K$-vector space of the polynomial ring $\K[X]$.
Given any sequence $(m_i)_{i\in\N}$, $m_i\in\K$,
define $\sigma\in\K[X]\dual$ to be the $\K$-linear functional
\[
  \sigma\colon \K[X] \longrightarrow \K,\quad
  X^i \longmapsto m_i.
\]
Hence, $\sigma$ is the composition of the map $\E$ of \labelcref{mapEE}
and the evaluation map $M_i \mapsto m_i$.
Further, let $M_{\sigma}$ be the $\K$-linear operator \[
  M_{\sigma}\colon \K[X] \longrightarrow \K[X]\dual,\quad
  p\longmapsto (q \mapsto \sigma(pq)).
\]
In the $\K$-vector space basis $X^i$ and the dual basis $(X^j)\dual$,
this map is described by an infinite Hankel matrix with entries $m_{i+j}$;
see \cite[Remark~2.1]{mourrain17:polyexp}.
Denote by $\mommat{a,b}$ the truncation of $M_{\sigma}$ to degrees $a,b$
\begin{equation}\label{def:momentmat}
  \mommat{a,b}\colon \K[X]_{\le b} \longrightarrow \K[X]\dual_{\le a}.
\end{equation}
The matrix $\mommat{a,b} = \lr{m_{i+j}}_{0\le i\le a,\ 0\le j\le b}$
is of size $(a+1)\times(b+1)$.
If $(m_i)_{i\in\N}$ is a sequence of moments of some distribution,
$\mommat{a,b}$ is called \emph{moment matrix}.

Assume now we are given an $r$-mixture of local Dirac mixture distributions of the form
$\sum_{j=1}^r \lambda_{j0} \dirac{\xi_j}
+ \lambda_{j1} \dirac{\xi_j}' + \cdots
+ \lambda\derivc{j,l_j} \dirac{\xi_j}\deriv{l_j}$
for some $\xi_j, \lambda\derivc{j,k_j} \in\K$, $0\le k_j\le l_j$, $1\le j\le r$.
Then its moments are of the form
\begin{equation}\label{eq:momentevalsigma}
  m_i = \sum_{j=1}^r \sum_{k_j=0}^{l_j} \lambda\derivc{j,k_j} \tfrac{\fac{i}}{\fac{(i-k_j)}} \xi_j^{i-k_j}
  = \sum_{j=1}^r (\Lambda_j(\partial)(X^i))(\xi_j) \in\K,\quad i\in\N,
\end{equation}
where
$\Lambda_j(\partial) \coloneqq \sum_{k=0}^{l_j} \lambda\derivc{j,k} \partial^k
\in \K[\partial]$
is a polynomial of degree $l_j$ in $\partial$
that is applied to the monomial $X^i$ as a differential operator.
We cite the following theorem in order to rephrase it in our language.
In particular, we specialize it to the univariate case.
We remark that the theorem can also be understood
in terms of the canonical form of a binary from.
For a detailed treatment of this viewpoint,
we refer to \cite{iarrobinokanev99} and the references therein.
\begin{Thm}[{\cite[Theorem~1.43]{iarrobinokanev99}, \cite[Theorem~4.1]{mourrain17:polyexp}}]\label{thm:pronyUnivLocal}
  Let $\K$ be an algebraically closed field of characteristic~$0$
  and let $m_0,m_1,\dots,m_{2s}\in\K$ for some $s\in\N$.
  Let $\mommat{s-1,s-1}, \mommat{s,s}$ be the corresponding Hankel matrices
  as in \labelcref{def:momentmat}.
  Assume $\rank\mommat{s-1,s-1} = \rank\mommat{s,s} = r'$.
  Then there exists a unique mixture of local mixtures of Diracs
  \[
    \mu \coloneqq \sum_{j=1}^r \Lambda_j(\partial)\dirac{\xi_j}
  \]
  for some $r\in\N$,
  $\xi_j\in\K$,
  $0\ne\Lambda_j\in\K[\partial]$,
  such that $\sum_{j=1}^r 1 + \deg(\Lambda_j) = r'$
  and its moments up to degree $2s$ coincide with $m_0,\dots,m_{2s}$.
  Further, as ideals of $\K[X]$, it holds that
  \[
    \K[X]\cdot \kernel\mommat{s-1,s} = \bigcap_{j=1}^r \idealspan{X-\xi_j}^{1+\deg{\Lambda_j}}.
  \]
\end{Thm}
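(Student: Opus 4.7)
The plan is to carry out a classical Prony/apolarity argument, using the finite-rank structure of the Hankel operator $M_\sigma$ attached to the sequence $(m_i)$. First, I would extend the sequence $m_0,\dots,m_{2s}$ (provisionally) to an infinite sequence whose Hankel operator has exact rank $r'$; the condition $\rank \mommat{s-1,s-1} = \rank \mommat{s,s} = r'$ is precisely the classical criterion ensuring that the Hankel matrix has a flat extension of the same rank, and any two such extensions agree, so the extended functional $\sigma \in \K[X]\dual$ is uniquely determined by the given data.

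Next, I would study the apolar (annihilator) ideal $\mathrm{Ann}(\sigma) = \{p \in \K[X] \mid M_\sigma(p) = 0\}$, which is a principal ideal in the univariate setting. The rank conditions force $\mathrm{Ann}(\sigma)$ to contain a monic polynomial $P$ of degree $r'$, and in fact the kernel $\kernel \mommat{s-1,s}$ is spanned by $P$ regarded as an element of $\K[X]_{\le s}$. Since $\K$ is algebraically closed, I would factor
\[
P = \prod_{j=1}^{r} (X - \xi_j)^{\mu_j}, \qquad \sum_{j=1}^{r} \mu_j = r',
\]
with distinct $\xi_j$, and declare $\mu_j = 1 + \deg \Lambda_j$. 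This gives the right-hand side of the displayed ideal identity. The main technical step here is to verify that the apolar ideal really is generated in the expected degree: this is exactly where the flat-extension/catalecticant rank equalities are used, and it is the principal obstacle — everything else is bookkeeping.

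Given the support points $\xi_j$ and the multiplicities $\mu_j$, I would recover the polynomials $\Lambda_j \in \K[\partial]$ by inverting a linear system. By \labelcref{eq:momentevalsigma}, the map sending the coefficient tuple $(\lambda\derivc{j,k})_{j,k}$ to the moment tuple $(m_0,\dots,m_{r'-1})$ is a confluent Vandermonde transformation; since the $\xi_j$ are distinct and the total multiplicity equals $r'$, this transformation is invertible, so the $\Lambda_j$ exist and are uniquely determined. One then checks that all higher moments $m_{r'},\dots,m_{2s}$ produced by this candidate mixture automatically match the prescribed values, because the Hankel operator of the candidate has the same rank and the same apolar ideal as $M_\sigma$, so the two sequences agree on the finite-dimensional range of interest.

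Finally, for uniqueness, suppose $\mu = \sum_{j=1}^{r} \Lambda_j(\partial)\dirac{\xi_j}$ is any mixture satisfying the conclusions; then its moment functional has annihilator $\bigcap_j \idealspan{X-\xi_j}^{1+\deg\Lambda_j}$, which is the principal ideal $(P)$ above. Hence the support points and multiplicities are forced, and the linear recovery step forces the $\Lambda_j$. This concludes the argument, modulo invoking \cite[Theorem~1.43]{iarrobinokanev99} or \cite[Theorem~4.1]{mourrain17:polyexp} for the core flat-extension/apolarity fact in the precise form needed.
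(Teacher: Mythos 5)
The paper offers no proof of this statement: it is quoted, specialized to one variable, from \cite{iarrobinokanev99} and \cite{mourrain17:polyexp} (``We cite the following theorem in order to rephrase it in our language''), so there is no internal argument to compare yours against. Judged on its own, your sketch follows the standard Kronecker--Prony--apolarity route and its skeleton is sound: flat extension of the truncated Hankel data to a rank-$r'$ functional $\sigma$, identification of the annihilator as the principal ideal $\idealspan{P}$ with $\deg P = r'$, factorization of $P$ over the algebraically closed field to obtain the $\xi_j$ and the multiplicities $1+\deg\Lambda_j$, recovery of the $\Lambda_j$ from an invertible confluent Vandermonde system, and propagation of the remaining moment equalities $m_{r'},\dots,m_{2s}$ via the order-$r'$ linear recurrence with characteristic polynomial $P$. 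Two caveats. First, your assertion that $\kernel\mommat{s-1,s}$ is \emph{spanned} by $P$ is false unless $r'=s$: the kernel is $P\cdot\K[X]_{\le s-r'}$, of dimension $s-r'+1$. This does not affect the displayed identity, since the ideal generated by the kernel is still $\idealspan{P}$, but the claim as written should be corrected. Second, you defer the single genuinely nontrivial step --- that the rank equality $\rank\mommat{s-1,s-1}=\rank\mommat{s,s}=r'$ forces a unique rank-preserving extension of the Hankel operator, equivalently that the annihilator of $\sigma$ is generated in degree $r'\le s$ and that every element of $\kernel\mommat{s-1,s}$ is divisible by $P$ --- to the very theorems whose statement you are proving, which makes the argument circular as a self-contained proof. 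As an account of how the cited results assemble into the form stated here, however, it is accurate and is consistent with the paper's own treatment, which is a citation followed by a rephrasing rather than a proof.
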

Note that the condition $\sum_{j=1}^r 1 + \deg(\Lambda_j) = r'$ is due to the fact that
$\deg(\Lambda_j) + 1$ is the dimension of the vector space spanned by $\Lambda_j$ and all its derivatives,
as we specialized to the univariate case.

This leads to a two-step algorithm for recovering the parameters of $\mu$ from finitely many moments:
first, compute the points $\xi_1,\dots,\xi_r$ from $\kernel\mommat{s-1,s}$ for $s$ sufficiently large;
next, determine the weights $\Lambda_j$ from \labelcref{eq:momentevalsigma}.
If $\deg{\Lambda_j}=0$ for all $j=1,\dots,r$, this algorithm is classically known as \emph{Prony's method},
and is also referred to as \emph{Sylvester's algorithm}
in the classical algebraic geometry literature.
See for instance \cite{iarrobinokanev99,landsberg:tensors} and the references therein.


In the following, we refine this algorithm for the case of mixtures of local mixtures of Diracs
of fixed order $l \coloneqq l_1 = \cdots = l_r$ where $l_j = \deg \Lambda_j$ for $j=1,\dots,r$.
In this case, it is usually possible to recover the parameters from significantly fewer moments.

\begin{Prop}\label{thm:pronyRecovery}
  Let $\K$ be a field of characteristic~$0$ and let
  $\mu \coloneqq \sum_{j=1}^r \Lambda_j(\partial) \dirac{\xi_j}$
  be an $r$-mixture of $l$-th order local mixtures of Diracs,
  i.\,e., $\xi_j\in\K$ and $\Lambda_j\in \K[\partial]$ with $\deg(\Lambda_j) = l$, $1\le j\le r$.
  Then, the parameters $\Lambda_j, \xi_j$ of $\mu$ can be recovered from the moments
  $m_0,m_1,\dots,m_{2(l+1)r-1}$ of $\mu$ using \ref{algo2}.
\end{Prop}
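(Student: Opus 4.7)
The plan is to refine \ref{thm:pronyUnivLocal} so as to exploit the uniform-order hypothesis $\deg\Lambda_j = l$: because we know the total rank $r' = r(l+1)$ a priori, we may skip the rank certification that the general theorem requires and shave one moment off the sample size. I would set $s \coloneqq r(l+1)$, so that the Hankel matrix $\mommat{s-1,s}$ is built precisely from the given moments $m_0,\dots,m_{2s-1}$.

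The first key step is to show that $\mommat{s-1,s}$ has rank exactly $s$ with one-dimensional kernel spanned by
\[
  P(X) \coloneqq \prod_{j=1}^r (X-\xi_j)^{l+1}.
\]
The inclusion $P \in \kernel\mommat{s-1,s}$ follows at once from \labelcref{eq:momentevalsigma}: for any $0 \le i \le s-1$,
\[
  \sigma(X^i P) = \sum_{j=1}^r \bigl(\Lambda_j(\partial)(X^i P)\bigr)(\xi_j) = 0,
\]
since $P$ vanishes at every $\xi_j$ to order $l+1 > \deg\Lambda_j$. For the rank, I would factor $\mommat{s-1,s}$ through the $r(l+1)$-dimensional image spanned by the confluent-Vandermonde vectors $\bigl(\tfrac{\fac{i}}{\fac{(i-k)}}\xi_j^{i-k}\bigr)_{0 \le i \le s-1}$ for $1\le j\le r$, $0\le k\le l$. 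Provided the $\xi_j$ are pairwise distinct, these vectors are linearly independent; together with the hypothesis $\deg\Lambda_j = l$, which guarantees that the top confluence level contributes nontrivially, this forces $\rank \mommat{s-1,s} = s$. The kernel is therefore one-dimensional and spanned by $P$.

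Given $P$, one reads off $\xi_1,\dots,\xi_r$ as the distinct roots of the extracted generator, each with multiplicity $l+1$. To recover the weights, the equations \labelcref{eq:momentevalsigma} for $i=0,1,\dots,r(l+1)-1$ form a linear system in the $r(l+1)$ unknowns $\lambda\derivc{j,k}$ whose coefficient matrix is precisely the confluent Vandermonde in $\xi_1,\dots,\xi_r$ with confluence $l+1$; this matrix is nonsingular under the distinctness hypothesis, so the $\Lambda_j$ are determined uniquely. Packaging these two phases — computing $\kernel\mommat{s-1,s}$ and factoring to extract $P$, then solving the confluent Vandermonde system — is precisely \ref{algo2}.

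The hard part will be the rank/kernel analysis of $\mommat{s-1,s}$, and in particular arguing cleanly that the fixed-order hypothesis removes the need for the moment $m_{2s}$ which \ref{thm:pronyUnivLocal} would normally use to certify $\rank\mommat{s,s} = r'$. Rather than verifying this rank a posteriori, I would predict $r' = r(l+1)$ from the known order and apply the confluent-Vandermonde argument directly to $\mommat{s-1,s}$; this is what makes $2(l+1)r$ moments suffice.
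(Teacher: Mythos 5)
Your argument is correct, but it takes a genuinely different route from the paper's. The paper does not prove the rank and kernel facts directly: it cites \cite[Theorem~3.1, Proposition~3.9]{mourrain17:polyexp} to get $\rank\mommat{s-1,s-1}=\rank\mommat{s,s}=(l+1)r$ as facts about the underlying measure (no moment $m_{2s}$ is ever computed, so no moment is actually ``shaved off''), then invokes the addendum of \ref{thm:pronyUnivLocal} to conclude $\K[X]\cdot\kernel\mommat{s-1,s}=\bigcap_j\idealspan{X-\xi_j}^{l+1}$, and phrases recovery through the nonlinear system \labelcref{eq:momentsystem}, $\mommat{r-1,(l+1)r}\,p^{l+1}=0$, in the $r$ coefficients of $p=\prod_j(X-\xi_j)$ — which is the actual first step of \ref{algo2} and is what lets the algorithm generically succeed with only $(l+2)r$ moments. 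You instead give a self-contained factorization argument, $\mommat{s-1,s}=W_{s-1}\,C\,W_s^{T}$ with $W$ confluent Vandermonde and $C$ block anti-triangular with anti-diagonal entries proportional to $\lambda_{jl}$, and extract $P=p^{l+1}$ as the one-dimensional kernel of the full matrix; this is cleaner and avoids the external citations, but it is really the worst-case fallback branch of \ref{algo2} (the linear computation of Steps~\labelcref{algo2:system}--\labelcref{algo2:infinite} after all rows have been added, which the paper identifies with \cite[Algorithm~3.2]{mourrain17:polyexp} in \ref{algo2:discussion}), so strictly speaking you prove that the stated moments suffice rather than that \ref{algo2} as written terminates through its iterative loop. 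Two details you should still spell out: the assertion that ``the top confluence level contributes nontrivially'' is precisely the invertibility of each $(l+1)\times(l+1)$ anti-triangular block of $C$, which holds iff $\lambda_{jl}\ne 0$, i.e., iff $\deg\Lambda_j=l$ exactly — this is the one place where the uniform-order hypothesis enters and it deserves the Leibniz-rule computation; and the pairwise distinctness of the $\xi_j$, which you correctly flag as needed both for $\rank W=s$ and for the nonsingularity of the confluent Vandermonde system in the weight-recovery step, should be stated as part of the meaning of ``$r$-mixture.''
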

\begin{proof}
  Let $\sigma$ and $M_{\sigma}$ be defined as above.
  Then, by \cite[Theorem~3.1]{mourrain17:polyexp},
  we have $\rank M_{\sigma} = (l+1)r$.
  It follows from \cite[Proposition~3.9]{mourrain17:polyexp} that
  $\rank \mommat{a,b} = (l+1)r$ for all $a,b \ge (l+1)r - 1$.
  In particular, for $s\coloneqq (l+1)r$,
  we have
  \[
    \rank\mommat{s-1,s-1} = \rank\mommat{s,s} = (l+1)r.
  \]

  The algorithm is based on the following observation.
  Let $p$ be the polynomial $p \coloneqq \prod_{j=1}^r (X-\xi_j) = X^r + \sum_{i=0}^{r-1} p_i X^i$,
  noting that knowledge of $p$ is enough to recover the points $\xi_j$.
  By the addendum of \ref{thm:pronyUnivLocal},
  we have $p^{l+1} \in (\K[x] \cdot \kernel \mommat{s-1,s}) \otimes_{\K} \algclsr{\K}$
  where $\algclsr{\K}$ is the algebraic closure of $\K$.
  Since also $p^{l+1} \in \K[x]$, it follows
  in particular that $p$ is mapped to $0$ under the composition of the maps
  \[
    \begin{tikzcd}[row sep=0.0ex]
      \K[X]_{\le r} \arrow{r}& \K[X]_{\le (l+1)r} \arrow{r} & \K[X]\dual_{\le r-1},\\%
      q \arrow[r,mapsto] & q^{l+1} \arrow[r,mapsto] & \mommat{r-1,(l+1)r} q^{l+1},
    \end{tikzcd}
  \]
  where the second map is the $\K$-linear map given by the moment matrix $\mommat{r-1,(l+1)r}$,
  which is a submatrix of $\mommat{s-1,s}$.
  The first map however is non-linear,
  defined by taking the $(l+1)$-th power of $q$ viewed as a polynomial.

  For the polynomial $p$,
  this yields the following polynomial system of $r$ equations of degree $l+1$
  in $r$ variables $p_0,\dots,p_{r-1}$ which are the monomial coefficients of $p$:
  \begin{equation}\label{eq:momentsystem}
    \mommat{r-1,(l+1)r} p^{l+1} = 0.
  \end{equation}
  By Bézout's theorem, this system of equations either has infinitely many
  or at most $(l+1)^r$ solutions.
  If the solution set is infinite, we need to add more algebraic constraints to the system
  in order to determine $p$, which is done by adding more rows to the moment matrix.

  By hypothesis, we have $\xi_1,\dots,\xi_r\in\K$. Therefore,
  termination of this algorithm and correct recovery of the points $\xi_1,\dots,\xi_r$
  follow from \ref{thm:pronyUnivLocal}.

  As for computation of the weights $\lambda\derivc{jk}$ in Step~\labelcref{algo2:vandermonde},
  note that, once the roots $\xi_j$ have been computed,
  the moments are a linear combination of the monomials $\xi_j^i$ and their derivatives
  given by \labelcref{eq:momentevalsigma},
  so to compute the weights $\lambda\derivc{jk}$,
  we solve the linear system
  \[
    \left(V_1,\dots,V_r\right)
    \begin{pmatrix}
      \lambda_{10} \\ \vdots \\ \lambda\derivc{1l} \\ \vdots \\ \lambda\derivc{rl}
    \end{pmatrix}
    =
    \begin{pmatrix}
      m_0 \\ \vdots \\ m_d
    \end{pmatrix}
  \]
  for $d\ge s$,
  where $(V_1,\dots,V_r)$ is a \emph{confluent Vandermonde matrix},
  for which each block is given by
  \[
    V_j = \left((\partial^k X^i)(\xi_j)\right)_{0\le i\le d,\atop 0\le k\le l} =
    \begin{pmatrix}
      1 & 0 & \cdots & 0 \\
      \xi_j & 1 & \cdots & 0 \\
      \vdots & \vdots & & \vdots \\
      \xi_j^d & d \xi_j^{d-1} & \cdots & \tfrac{\fac{d}}{\fac{(d-l)}} \xi_j^{d-l}
    \end{pmatrix}.
  \]
  Since the system is linear,
  uniqueness of the solution follows from the claim that
  the confluent Vandermonde matrix is of full rank $s$.
  Without loss of generality,
  we can assume the confluent Vandermonde matrix to be of size $s\times s$ by choosing a suitable submatrix.
  Then the claim follows from the fact that the Hermite interpolation problem has a unique solution
  if the points $\xi_1,\dots,\xi_r$ are distinct
  or, equivalently, from the product formula for the determinant of a square confluent Vandermonde matrix;
  see \cite[Problem~6.1.12]{hornjohnson1994:topics}.
\end{proof}

\begin{algorithm}
\caption{Parameter recovery for mixtures with $r$ components of order $l$}\label{algo2}
\begin{algorithmic}[1]
  \Require The (maximum) order $l\ge 0$ of the mixture components,
  the number of mixture components $r\ge 1$
  and the moments $m_0,\dots,m_{(l+2)r},\dots$
  \Ensure The parameters $\xi_j$ and $\lambda\derivc{jk}$
  for $1\le j\le r$, $0\le k \le l$,
  satisfying \ref{eq:momentevalsigma}.
  \State\label{algo2:system}%
  Solve the polynomial system
  $\mommat{r-1,(l+1)r} p^{l+1} = 0$ for $p$.
  \State\label{algo2:infinite}%
  If the solution set is infinite,
  increase the number of rows of the moment matrix and repeat.
  \State\label{algo2:unique}%
  If there is more than one solution, use further information,
  such as the additional moment $m_{(l+2)r}$, to restrict to a single solution $p$.
  \State Compute the roots $\xi_1,\dots,\xi_r$ of $p$.
  \State\label{algo2:vandermonde}%
  Compute the weights $\lambda\derivc{jk}$ by solving a confluent Vandermonde system.
\end{algorithmic}
\end{algorithm}

Note that the algorithm is designed to use as few moments as possible.
See also \ref{algo2:discussion} for a discussion of the number of moments used by this algorithm.

\begin{Ex}
  For $r=2$, $l=1$, let $m_0,\dots, m_5$ be the moments of a corresponding distribution
  and write $p = p_0 + p_1 X + X^2$.
  Then the system of equations \labelcref{eq:momentsystem} is given by the quadratic equations
  \begin{align}
    \label{eq:momentsystemrank2}
    \begin{pmatrix}
      m_0 & m_1 & m_2 & m_3 & m_4\\
      m_1 & m_2 & m_3 & m_4 & m_5
    \end{pmatrix}
    \begin{pmatrix}
      p_0^2 \\ 2 p_0 p_1 \\ 2 p_0 + p_1^2 \\ 2 p_1 \\ 1
    \end{pmatrix}
    = 0.
  \end{align}
  If $\xi_1, \xi_2$ are the points of the underlying distribution,
  one solution of this system is given by $p = (X-\xi_1)(X-\xi_2)$,
  that is, $p_1 = -(\xi_1 + \xi_2)$ and $p_0 = \xi_1 \xi_2$.
  Hence, computing $p_1$ by eliminating $p_0$ from System~\labelcref{eq:momentsystemrank2}, and vice versa,
  is equivalent to the process of recovering the parameters
  from elementary symmetric polynomials
  outlined in \ref{sec:recoveryElim}.
  However, with the presented new approach, we need to eliminate only a single variable instead of~$5$,
  which makes this much more viable computationally.
\end{Ex}

\begin{Rem}
  \label{algo2:discussion}
  We discuss some of the steps involved in \ref{algo2}.
  Solving the system in Step~\labelcref{algo2:system} can be done using symbolic methods,
  as outlined in the previous sections,
  or using numerical tools.
  In \ref{fourierexample} for instance,
  we use a numerical solver for this which is based on homotopy continuation methods.

  Restricting from finitely many solutions to a single one
  using the additional moment $m_{(l+2)r}$ in Step~\labelcref{algo2:unique}
  works by observing that $\mommat{r,(l+1)r} p^{l+1} = 0$.
  If a numerical solver is used, the computed solution will only be approximately zero,
  and one should assert that the selected solution is significantly closer to zero
  than all other possible choices.
  Another common approach to check uniqueness of the solution is to perform monodromy loop computations
  using a homotopy solver.

  In the worst case, this algorithm makes use of moments $m_0,\dots,m_{2(l+1)r-1}$
  as stated in \ref{thm:pronyRecovery}.
  Then solving the polynomial system in \ref{algo2} simplifies,
  since the solution is in the kernel of $\mommat{(l+1)r-1,(l+1)r}$
  which is a linear problem.
  In this case, the algorithm performs the same computation as \cite[Algorithm~3.2]{mourrain17:polyexp},
  so this guarantees termination.

  However, as \ref{algo2} solves a more specific problem,
  it can usually recover the parameters using a smaller number of moments.
  The polynomial system in Step~\labelcref{algo2:system} consists of $r$ equations of degree $l+1$ in $r$ unknowns,
  so, generically, we expect finitely many solutions in Step~\labelcref{algo2:infinite}
  already in the first iteration of the algorithm.
  This means we expect to algebraically identify
  the parameters from the moments $m_0,\dots,m_{(l+2)r-1}$
  and to rationally identify them using one additional moment,
  so usually we do not need all the moments up to $m_{2(l+1)r-1}$.
  This is also what we observe in practice, so we do not get infinitely many solutions for generic input
  if we use the moments up to $m_{(l+2)r-1}$.
  By a parameter count, we cannot expect to be able to recover the parameters from fewer moments,
  so the number of moments we use in practice is the minimal number possible.
\end{Rem}

We use the term \emph{algebraic identifiability} in the same way as in \cite{ARS},
that is, meaning that the map from the parameters to the moments is generically finite-to-one.
In this case, the \emph{identifiability degree} is the cardinality of the preimage 
of a generic point in the image of the moment map (up to permutation).
Similarly, \emph{rational identifiability} means that the moment map is generically one-to-one.

\begin{Prop}\label{algidentifiability}
  Let $d\ge (l+2)r-1$.
  Then algebraic identifiability holds for the moment map sending the parameters $\xi_j,\Lambda_j$
  to the moments $m_0,\dots,m_d$, where $\deg \Lambda_j = l$, $1\le j\le r$.
\end{Prop}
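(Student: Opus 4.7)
The plan is to show that the differential of the moment map $\Phi\colon (\xi_j,\Lambda_j)_{j=1}^r \mapsto (m_0,\dots,m_d)$ has generic rank equal to $(l+2)r$, the dimension of the parameter space. Since $d+1\ge (l+2)r$ by hypothesis, this immediately implies that $\Phi$ is generically finite-to-one on its image, which is precisely the definition of algebraic identifiability.

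Concretely, writing $\Lambda_j = \sum_{k=0}^l \lambda\derivc{jk} \partial^k$ and using the moment formula~\ref{eq:momentevalsigma}, I would compute
\[
  \frac{\partial m_i}{\partial \lambda\derivc{jk}} = (\partial^k X^i)(\xi_j),
  \qquad
  \frac{\partial m_i}{\partial \xi_j} = \sum_{k=0}^l \lambda\derivc{jk} (\partial^{k+1} X^i)(\xi_j).
\]
The Jacobian has one column per parameter $\lambda\derivc{jk}$ and one per $\xi_j$. For each $j$, I would eliminate the lower-derivative contributions from the $\xi_j$-column by subtracting suitable multiples of the columns for $\lambda\derivc{j,k+1}$ with $k=0,\dots,l-1$, leaving this column equal to $\lambda\derivc{jl} \cdot \bigl((\partial^{l+1} X^i)(\xi_j)\bigr)_{i=0}^d$. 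The hypothesis $\deg\Lambda_j = l$ gives $\lambda\derivc{jl}\neq 0$, so after rescaling the Jacobian is column-equivalent to the $(d+1)\times(l+2)r$ matrix
\[
  \bigl((\partial^k X^i)(\xi_j)\bigr)_{0\le i\le d,\ 1\le j\le r,\ 0\le k\le l+1}.
\]

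Next I would observe that this matrix is the evaluation matrix of the Hermite interpolation problem at the points $\xi_1,\dots,\xi_r$, each with multiplicity $l+2$. Its top $(l+2)r$ rows form a confluent Vandermonde matrix whose determinant is, up to a nonzero constant, $\prod_{1\le a<b\le r}(\xi_a-\xi_b)^{(l+2)^2}$; this is nonzero whenever the $\xi_j$ are pairwise distinct, a Zariski-dense condition. Equivalently, one can invoke the uniqueness of Hermite interpolation already used in the Vandermonde step of the proof of \ref{thm:pronyRecovery}. Thus the Jacobian attains rank $(l+2)r$ on a dense open subset, and algebraic identifiability follows by semicontinuity of fiber dimension.

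The main obstacle is the column-reduction step, which relies crucially on $\lambda\derivc{jl}\neq 0$; this is exactly where the assumption $\deg\Lambda_j = l$ enters, and without it the Jacobian rank drops, reflecting the fact that lower-order local mixtures live in a proper stratum of the parameter space. Once this reduction is set up correctly, everything else reduces to the well-known nonvanishing of the confluent Vandermonde determinant.
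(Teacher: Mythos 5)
Your proof is correct, but it takes a genuinely different route from the paper. The paper does not compute anything: it invokes known non-defectivity results for secant varieties of the tangential variety of the Veronese curve (for $l=1$, citing \cite{cgg02:secanttangent}) and of its higher osculating varieties (for $l\ge 2$, citing \cite{bcgi2004:osculating}), concluding that the moment variety has the expected dimension $\min\{(l+2)r-1,d\}$ in $\P^d$ and hence that the moment map is generically finite-to-one precisely when $d\ge (l+2)r-1$. Your argument instead proves the needed dimension count from scratch via a Terracini-style computation: the column reduction turning the $\xi_j$-column into $\lambda\derivc{jl}$ times the $(l+1)$-st derivative column is correct, the resulting matrix is the confluent Vandermonde matrix with each node of multiplicity $l+2$, and its determinant $\prod_{a<b}(\xi_a-\xi_b)^{(l+2)^2}$ (up to a nonzero factorial constant, valid in characteristic $0$) is generically nonzero, so the Jacobian has generic rank $(l+2)r$ and generic fibers are finite. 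What your approach buys is a self-contained, uniform-in-$l$ proof that needs no citation and that makes visible exactly where the hypothesis $\deg\Lambda_j=l$ enters (namely $\lambda\derivc{jl}\ne 0$); in effect you re-derive the cited non-defectivity statements in the one-dimensional case, where they reduce to Hermite interpolation. What the paper's route buys is brevity and an explicit link between the identifiability question and the geometry of secant, tangential and osculating varieties, which is the framing used throughout that section; it also would generalize to the multivariate setting, where the direct Vandermonde argument breaks down and defectivity becomes a genuine issue.
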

\begin{proof}
  By~\cite[Proposition~3.1]{cgg02:secanttangent}, the secant varieties of the tangent variety
  of the Veronese curve are non-defective, that is,
  for $l=1$,
  the dimension of the moment variety in $\P^d$
  for mixtures with $r$ components of order $l$
  is the expected one: $\min({3r-1}, d)$.
  In particular this means that
  the moment variety fills the ambient space sharply if $d = 3r-1$
  and does not fill the ambient space if $d > 3r-1$.
  Thus, the moment map is generically finite-to-one if $d\ge 3r-1$.
  Note that for $d < 3r-1$ the cardinality of the preimage of a generic point
  is infinite for dimension reasons.

  Similarly, for $l\ge 2$,
  the moment variety is a secant variety of the $l$-th osculating variety
  to the Veronese curve which is non-defective by \cite[Section~4]{bcgi2004:osculating},
  so the moment map is generically finite-to-one for $d\ge (l+2)r-1$.
\end{proof}


We do not currently know whether rational identifiability holds as soon as $d\ge (l+2)r$,
although we expect this to be true as discussed in \ref{algo2:discussion}.

\begin{OP}
The computations we have done in this section suggest 
that the algebraic identifiability degree for a mixture with $r$ components of order $l$ is $(l+1)^r$,
which is the expected number of solutions of \ref{eq:momentsystem}.
\end{OP}  

\begin{Rem}\label{rem:multivariatediscussion}
  It would be interesting to generalize \ref{algo2} to the multivariate setting.
  Note that in this case \cite[Algorithm~3.2]{mourrain17:polyexp} can be used
  to find the decomposition.
  However, since this does not take into account the special structure of our input,
  namely that all the mixture distributions have the same order $l$,
  this approach might use more moments than necessary.
  This is similar to the univariate case as explained in \ref{algo2:discussion}.

  Further, the algorithm in \cite[Section~6]{bernarditaufer2018}
  also computes a generalized decomposition from a given set of moments.
  This algorithm differs from our \ref{algo2} in that
  it computes parameters of \emph{any} generalized decomposition explaining the given moments,
  rather than the unique decomposition in which each term corresponds to the same order $l$.
  In the one-dimensional case, when using as few moments as possible,
  this usually leads to a non-generalized decomposition,
  which does not recover the parameters we are interested in.
  See also the related discussion in \cite[Section~7.1]{bernarditaufer2018}.
\end{Rem}

\begin{Rem}

We briefly discuss how the problem of parameter recovery
of a mixture of $1$-local mixtures simplifies,
if the mixture components
$\dirac{\xi_j} + \alpha_j \dirac{\xi_j}'$, $1\le j\le r$,
are known to differ only in the parameters $\xi_j$,
but have a constant parameter $\alpha\coloneqq \alpha_1 =\cdots =\alpha_r$.
For this, let $X$ be a distribution with moments
$\E(X^i) = \sum_{j=1}^r \lambda_j (\xi_j^i + \alpha i \xi_j^{i-1})$
with a fixed parameter $\alpha$.
Further, let $Y$ be the distribution with moments
$\E(Y^i) = \sum_{j=1}^r \lambda_j \xi_j^i$.
Then we have \[
  \E(X^i) = \E(Y^i) + \alpha i \E(Y^{i-1})
\] and conversely \[
  \E(Y^i) = \sum_{k=0}^i \frac{\fac i}{\fac k} (-\alpha)^{i-k} \E(X^k).
\]
Hence, if $\alpha$ is known,
this allows to recover the mixing distribution $Y$ from the moments of $X$.
The parameters of $Y$ can then be recovered, e.\,g., using Prony's method.

In case $\alpha$ is fixed, but unknown,
treating $\alpha$ as a variable in the moment matrix
$\mommat{r}(Y) = (\E(Y^{i+j}))_{0\le i,j\le r}$,
it can be determined as one of the roots of $\det \mommat{r}(Y)$,
which is a polynomial of degree $r(r+1)$ in $\alpha$.
\end{Rem}

\section{Applications}
\label{sec:applications}
\subsection{Moments and Fourier coefficients}

In this section, we show how the tools developed in this paper
can be applied to the problem of recovering a piecewise-polynomial function
supported on the interval $\linterval{-\pi,\pi}$
from Fourier samples; see \cite{plonka14:pronystructured}.
For this, we describe how
moments of a mixture of local mixtures arise
as the Fourier coefficients of a piecewise-polynomial function
and illustrate this numerically.
For simplicity, we focus on the case $l=1$ of piecewise-linear functions.

Let $t_j\in\interval{-\pi,\pi}$, $1\le j\le r$, be real points and
let $f\colon \linterval{-\pi,\pi} \to \C$ be the piecewise-linear function given by
\begin{equation}
  \label{def:piecewiselinear}
  f(x)\coloneqq \sum_{j=1}^{r-1} \lr{f_j + (x-t_j) f_j'} \indicator{\linterval{t_j,t_{j+1}}}(x),
\end{equation}
where $f_j, f_j'\in\C$.
In particular, splines of degree~$1$ are of this form,
but we do not require continuity here.
The Fourier coefficients of $f$ are defined to be
\[
  c_k\coloneqq \frac{1}{2\pi} \int_{-\pi}^{\pi} f(x) \e^{-\I k x} \d x,\quad k\in\Z.
\]
from which we obtain
\[
  c_k = \frac{1}{2\pi(\I k)^2}
  \sum_{j=1}^r \lr{\I k (f_j-f_{j-1} + (t_{j-1}-t_j) f_{j-1}') + (f_j' - f_{j-1}')} \e^{-\I k t_j},
\]
for $k\in\Z\setminus\{0\}$,
where $f_0,f_0',f_r,f_r' \coloneqq 0$.
Further, let
\begin{equation}\label{eq:fourierparameters}
\begin{aligned}
  \xi_j &\coloneqq \e^{-\I t_j},\\
  \lambda_j &\coloneqq \xi_j^{-s} \lr{f_j'-f_{j-1}' - \I s (f_j - f_{j-1} + (t_{j-1}-t_j) f_{j-1}')},\\
  \lambda_j' &\coloneqq \xi_j^{1-s} \I (f_j - f_{j-1} + (t_{j-1}-t_j) f_{j-1}').
\end{aligned}
\end{equation}
Assume now, we are given finitely many Fourier coefficients $c_{-s},\dots,c_{s}$ for some $s\in\N$.
Then, for $0\le k\le 2s$, $k\ne s$, we define
\begin{equation}\label{eq:fourier2moments}
  m_k \coloneqq 2\pi (\I (k-s))^2 c_{k-s}
  = \sum_{j=1}^r \lambda_j \xi_j^k + \lambda_j' k \xi_j^{k-1}.
\end{equation}
Thus, from the knowledge of Fourier coefficients $c_{-s},\dots,c_s$ of $f$,
we can compute $m_k$, $k\ne s$,
which we interpret as the moments of a mixture of $1$-local mixtures
with support points $\xi_j$ on the unit circle.
Extending the definition to $m_s$,
by construction we have
\[
  m_s \coloneqq \sum_{j=1}^r \lambda_j \xi_j^s + \lambda_j' s \xi_j^{s-1}
  = \sum_{j=1}^r f_j' - f_{j-1}' = 0.
\]
All in all, we know the moments $m_0,\dots,m_{2s}$ of this $1$-local mixture.
Recovering the parameters $\xi_j,\lambda_j,\lambda_j'$ via \ref{algo2}
generically requires the moments $m_0,\dots,m_{3r}$,
so we need to choose $2s \ge 3r$.
Subsequently retrieving the original parameters $t_j,f_j,f_j'$
from~\labelcref{eq:fourierparameters} is straightforward.

\begin{Rem}\label{rem:piecewiseperiodic}
  The piecewise-linear function $f$ is viewed as a periodic function on the interval $\linterval{-\pi,\pi}$,
  in the discussion above.
  For simplicity in presentation,
  we assumed that $f$ is constantly zero outside of $\linterval{t_1,t_r}$,
  representing a constant line segment.
  More generally, one can adapt the computation to account for an additional (non-zero) line segment there,
  without changing the number of jumping points or required samples.
  Thus, the function $f$ consists of $r$ line segments
  and has $r$ discontinuities.
\end{Rem}

\begin{Ex}\label{fourierexample}
  We apply the process described above to a piecewise-linear function
  with $r=10$ line segments on the interval $\linterval{-\pi,\pi}$.
  The parameters $t_j,f_j,f_j'$ defining the function as in \labelcref{def:piecewiselinear}
  are listed in \ref{fig:fourierparameters}.
  The random jump points $t_j$ are chosen uniformly on the interval.
  The jump heights $f_j$ and slopes $f_j'$ are chosen with respect to a Gaussian distribution.
  The function as well as the sampling data are visualized in \ref{fig:piecewiselinear_plot}.
  By Fourier transform, the Fourier coefficients carry the same information
  as the evaluations of the Fourier partial sum at equidistantly-spaced sampling points.
  The number of sampling points equals the number of Fourier coefficients needed for reconstruction,
  namely $3r+1=31$. From $2s\ge 3r$, we determine $s=15$.
  We compute the Fourier coefficients $c_{-s},\dots,c_{s}$ from the given data
  and add some noise to each of these coefficients,
  sampled from a Gaussian distribution with standard deviation $10^{-12}$.

  \begin{table}[ht]
    \centering
    \footnotesize
    \begin{tabular}{cccc}
      $j$ & $t_j$ & $f_j$ & $f_j'$ \\
      \hline
      1  &-2.814030328751694  &-0.20121264876344414  &-0.775069863870378     \\
      2  &-2.457537611167516  &-0.35221920435611676  &-0.9795392068942285    \\
      3  &-1.4536804635810938 &-0.9254256123988903   &0.26040229778962753    \\
      4  &-1.1734228328971805 &0.4482105605664995    &-0.46848914917290574   \\
      5  &-0.6568874684874002 &1.11978779941218      &-0.8808972481620518    \\
      6  &0.54049294753688    &0.3012272070859375    &0.2777255506414151     \\
      7  &1.0213620344785337  &-0.8357295816882367   &1.5239161501048377     \\
      8  &1.0930147137662223  &-0.2071744440917742   &-1.7777276640658903    \\
      9  &1.6867064885416054  &0.8681006042361324    &-2.9330595087256466    \\
      10 &2.7678373800858678  &                      &
    \end{tabular}
    \caption{The parameters of the piecewise-linear function of \ref{fourierexample}.}
    \label{fig:fourierparameters}
  \end{table}

  \begin{figure}[ht]
    \centering
    \vspace{-.3cm}
    \includegraphics{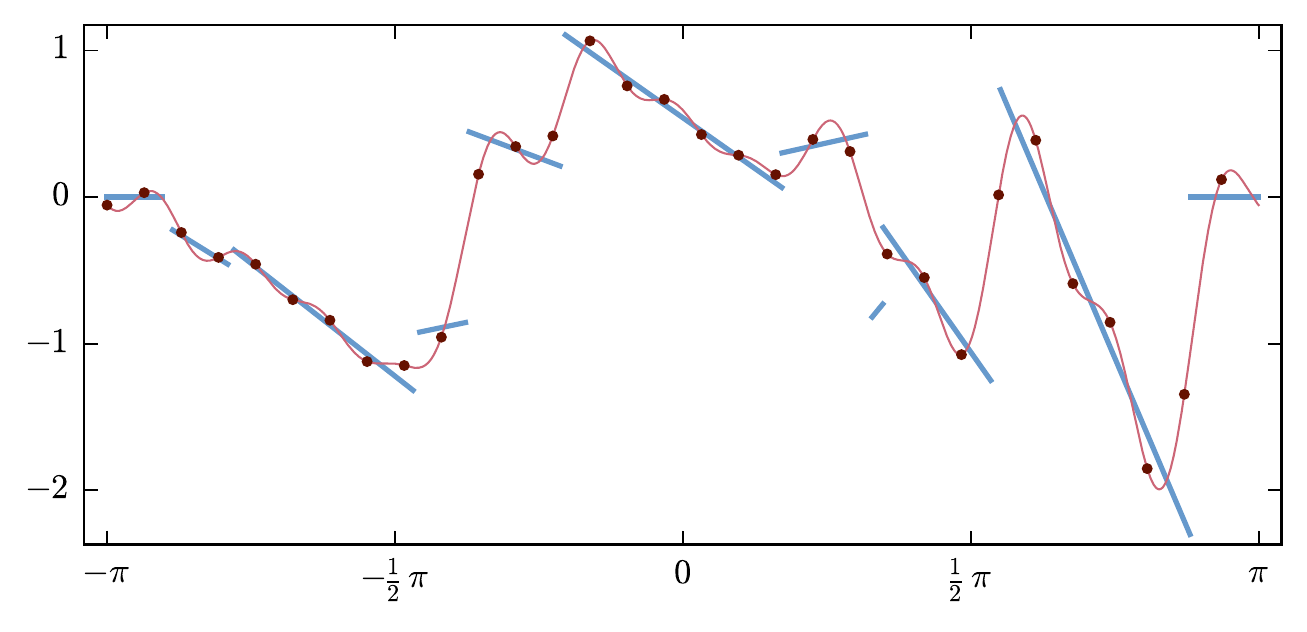}
    \vspace{-.5cm}
    \caption{The piecewise-linear function of \ref{fourierexample}
      with $r=10$ line segments;
      the Fourier partial sum approximation of order $s=15$
      and $2s+1=31$ equidistantly-spaced sampling points.}
    \label{fig:piecewiselinear_plot}
  \end{figure}

  In order to reconstruct the piecewise-linear function from the Fourier coefficients,
  we compute the moments $m_0,\dots,m_{3r}$ via \ref{eq:fourier2moments}
  and apply \ref{algo2} using numerical tools.
  From the moments $m_0,\dots,m_{3r-1}$, we get a system of $r$ quadratic equations in $r$ unknowns,
  which we solve using the Julia package \emph{HomotopyContinuation.jl}
  \cite{julia:HomotopyContinuation}, version~0.3.2,
  from which we obtain up to $2^r$ finite solutions.
  From these, we choose the one that best solves the equation system
  $\mommat{r,2r} p^2 = 0$
  induced by the additional moment $m_{3r}$.
  In this example, we obtain $1024$ solutions,
  the best of which has error $1.54\cdot 10^{-10}$ in the $\ell^2$-norm;
  the second best solution has error $3.70\cdot 10^{-4}$, which is significantly larger,
  so we accept the solution.

  Next, we compute the points $\xi_j$ using the Julia package \emph{PolynomialRoots.jl}
  \cite{julia:PolynomialRoots}, version~0.2.0,
  and solve an overdetermined confluent Vandermonde system for the weights $\lambda_j,\lambda_j'$,
  for which we use a built-in least-squares solver.
  Lastly, we use \labelcref{eq:fourierparameters} to compute the parameters $t_j,f_j,f_j'$.
  Julia code for these computations can be found in the ancillary files of the ArXiv-version of this article.
  The numerical computations were carried out using the Julia language \cite{julia2017}, version~1.0.0.

  In this example, the total error we get for the reconstructed points $t_1,\dots,t_{10}$ is
  $3.89\cdot 10^{-10}$
  in the $\ell^2$-norm, whereas for $f_j$ and $f_j'$, $1\le j\le 9$, we get
  $2.15\cdot 10^{-7}$ and
  $2.35\cdot 10^{-7}$, respectively.
  Even though, in this example, one of the line segments is quite far off of the Fourier partial sum,
  as shown in \ref{fig:piecewiselinear_plot},
  the sampling data still contains enough information to reconstruct it.
\end{Ex}

We observe that we cannot always reconstruct the randomly chosen points correctly using homotopy continuation,
but many times reconstruction is successful.
We expect that the separation distance among the points plays a major part in numerical reconstruction.
If the randomly chosen points are badly separated,
it will be difficult to distinguish them numerically by just using the moments,
as is the case if $l=0$; see \cite{moitra2015:superresolution}.

Further, we observe that, after having obtained the points,
solving the confluent Vandermonde system often induces additional errors of about three orders of magnitude,
resulting from the possibly bad condition of the confluent Vandermonde matrix.
A detailed discussion of this condition number
exceeds the scope of this paper, so we leave it for further study.

\begin{Rem}
  If $l>1$, one can adapt \ref{algo2} in a similar fashion
  to the reconstruction of functions that are piecewisely defined by polynomials of degree~$l$.
  As this requires solving a system of polynomial equations of degree~$l$,
  the involved computations are more challenging.
  Note however that,
  under additional assumptions on the smoothness of the function,
  computations can be reduced to a polynomial system of smaller degree.
  For example, if we let $l=3$ and additionally impose $\contin{1}$-continuity,
  the second derivative is piecewise-linear,
  so reconstruction can be accomplished by applying the method outlined above.
\end{Rem}

\subsection{Local mixture distributions}
\label{sec:applications2}

Local mixture models have been proposed as a means of dealing with
small variation that is unaccounted for 
when fitting data to a model, 
see \cite{anaya-izquierdo2007} 
and the references therein.
In this case the model is augmented by truncating 
a Taylor-like expansion of the probability density function.

\begin{Def}
The local mixture model of a regular exponential family  $\phi_{\xi}(x)$ is
\[\psi_{\xi}(x) := \phi_{\xi}(x) + \sum_{i=1}^l \alpha\derivc{i} \phi_{\xi}\deriv{i}(x), \]
for parameters $\alpha_1,\dots, \alpha_l$ 
such that $\psi_{\xi}(x) \geq 0$ for all $x$.
\end{Def}
The local mixture model defined this way is a convolution of a local Dirac mixture
with the member of the exponential family centered at $0$, i.\,e.,
\[
  \psi_{\xi} = \phi_0 \convolution \lr{\dirac{\xi} + \sum_{i=1}^l \alpha\derivc{i} \dirac{\xi}\deriv{i}}.
\]

\begin{Rem}\label{mgfconvolution}
Let $M_1(t), M_2(t)$ be the moment generating functions
of the two distributions in the convolution, respectively,
i.\,e.,
$M_2(t) \coloneqq \sum_{k\ge 0} m_k \frac{t^k}{\fac k}$ where
\[
  m_k = \xi^k + \sum_{i=1}^{\min\{l,k\}} (-1)^i \alpha_i \tfrac{\fac{k}}{\fac{k-i}} \xi^{k-i}.
\]
Note that the sign changes are due to the property of the derivative of the Dirac distribution that
\[
  \int \phi(x)\d\delta_{\xi}\deriv{i}(x) = (-1)^{i} \phi^{(i)}(\xi).
\]
Then the product $M_1(t) M_2(t)$ is the moment generating function corresponding to $\psi_{\xi}$,
so the moments $m_k$ of the underlying local Dirac mixture can be computed
if the moments of a random variable with density $\psi_{\xi}$
as well as the moments corresponding to $\phi_0$ are known.
Thus, we can reduce the problem of parameter inference
to the problem of parameter recovery of a local Dirac mixture.
\end{Rem}

\begin{Ex}[A mixture of two local Gaussians with known common variance]\label{ex:localgaussians}
  We numerically apply the process outlined above to a Gaussian distribution.
  For this, let $\phi_0$ be the density of a standard Gaussian distribution
  and let $\psi \coloneqq \lambda \psi_{\xi_1} + (1-\lambda) \psi_{\xi_2}$
  be a $2$-mixture of local distributions of order~$2$,
  where $\psi_{\xi_j} = \phi_{\xi_j} + \alpha_{j1} \phi_{\xi_j}' + \alpha_{j2} \phi_{\xi_j}''$.
  We choose the parameters as follows:
  $(\xi_1, \alpha_{11}, \alpha_{12}) = (-1, 0.1, 0.4)$,
  $(\xi_2, \alpha_{21}, \alpha_{22}) = (2, -0.2, 0.6)$
  and $\lambda = 0.6$.
  Note that the $\psi_{\xi_j}$ are non-negative for this choice of $\alpha_{ji}$,
  so they are indeed probability density functions; see \cite[Example~4]{marriott02:localmixtures}.
  We create a sample of size $20{,}000$ from this probability distribution
  using Mathematica \cite{mathematica11}
  and compute the empirical moments of that sample.
  Using that, we derive the empirical moments of the underlying $2$-mixture of local Dirac distributions
  as explained in \ref{mgfconvolution}
  and then apply \ref{algo2} to infer the parameters.
  We obtain the following values:
  \begin{align*}
    \xi_1 &=-0.98121, &
    \alpha_{11} &=0.14076, &
    \alpha_{12} &=0.39268, &
    \lambda &=0.59457, \\
    \xi_2 &=1.95600, &
    \alpha_{21} &=-0.20486, &
    \alpha_{22} &=0.62641.
  \end{align*}
  Note that for this process the distribution $\phi_0$ is assumed to be known.
  In particular, we need to know its standard deviation or have a way of estimating it.
  The reconstructed parameters only provide a rough approximation to the original parameters.
  Increasing the sample size can give a better approximation.
  The original distribution and the reconstructed one are shown in \ref{fig:localgaussians_plot}.
\end{Ex}

\begin{figure}[ht]
  \centering
  \vspace{-.3cm}
  \includegraphics{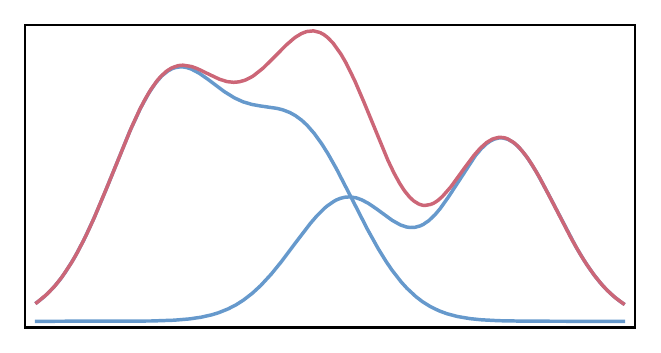}
  \includegraphics{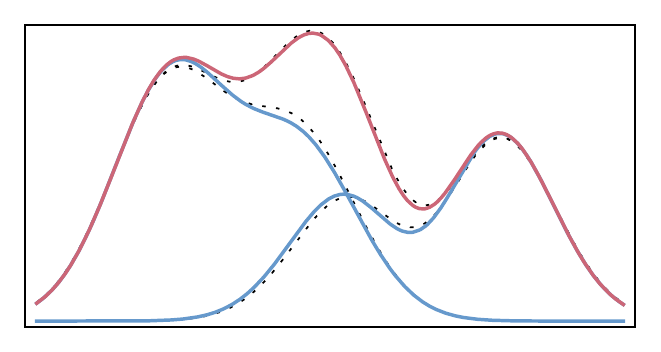}
  \vspace{-.3cm}
  \caption{The $2$-mixture of local Gaussian distributions of \ref{ex:localgaussians} on the left,
  as well as its reconstruction on the right;
  the dotted lines are the original distribution for comparison.}
  \label{fig:localgaussians_plot}
\end{figure}

\section*{Acknowledgments}
We are grateful to our advisors Stefan Kunis and Tim R\"omer for discussions and support,
as well as helpful comments on a preliminary version of this paper. 
We would like to thank the anonymous referees for their constructive comments and suggestions.
We also thank
Paul Breiding for help on using the \emph{HomotopyContinuation.jl} package.
Both authors were supported by the DFG grant GK 1916, Kombinatorische Strukturen in der Geometrie.

\setlength{\emergencystretch}{1em}
\printbibliography

\end{document}